\numberwithin{equation}{section}
\numberwithin{figure}{section}
\theoremstyle{plain}
\newtheorem{thm}{\protect\theoremname}
\theoremstyle{plain}
\newtheorem{prop}[thm]{\protect\propositionname}
\theoremstyle{plain}
\theoremstyle{remark}
\newtheorem*{rem*}{\protect\remarkname}
\theoremstyle{plain}
\newtheorem{lem}[thm]{\protect\lemmaname}
\tikzset{
  reversed with radius/.style={
    x radius=#1,
    y radius=-#1,
 }
}
\tikzset{
  with arrows/.style={
    decoration={ markings,
      mark=between positions #1 and .999 step #1 with {\arrow{stealth}}
    }, postaction={decorate}
  }, with arrows/.default=25mm,
} 
\providecommand{\lemmaname}{Lemma}
\providecommand{\propositionname}{Proposition}
\providecommand{\theoremname}{Theorem}
\providecommand{\corollaryname}{Corollary}
\newcommand{\abs}[1]{\ensuremath{|#1|}}
\newcommand{\Abs}[1]{\ensuremath{\left|#1\right|}}
\newcommand{\norm}[2]{\ensuremath{|\!|#1|\!|_{#2}}}
\newcommand{\Norm}[2]{\ensuremath{\left|\!\left|#1\right|\!\right|_{#2}}}
\newcommand{\cM}{\mathcal{M}}
\newcommand{\ccap}{{\rm cap}}
\providecommand{\lemmaname}{Lemma}
  \providecommand{\propositionname}{Proposition}
  \providecommand{\remarkname}{Remark}
\providecommand{\theoremname}{Theorem}
\providecommand{\lemmaname}{Lemma}
  \providecommand{\propositionname}{Proposition}
\providecommand{\theoremname}{Theorem}
\providecommand{\corollaryname}{Corollary}
\providecommand{\lemmaname}{Lemma}
\providecommand{\propositionname}{Proposition}
\providecommand{\remarkname}{Remark}
\providecommand{\theoremname}{Theorem}
\providecommand{\corollaryname}{Corollary}
\providecommand{\lemmaname}{Lemma}
\providecommand{\propositionname}{Proposition}
\providecommand{\remarkname}{Remark}
\providecommand{\theoremname}{Theorem}
\providecommand{\lemmaname}{Lemma}
\providecommand{\propositionname}{Proposition}
\providecommand{\remarkname}{Remark}
\providecommand{\theoremname}{Theorem}
\providecommand{\lemmaname}{Lemma}
\providecommand{\propositionname}{Proposition}
\providecommand{\remarkname}{Remark}
\providecommand{\theoremname}{Theorem}
\providecommand{\lemmaname}{Lemma}
\providecommand{\propositionname}{Proposition}
\providecommand{\remarkname}{Remark}
\providecommand{\theoremname}{Theorem}
\providecommand{\corollaryname}{Corollary}
\providecommand{\lemmaname}{Lemma}
\providecommand{\propositionname}{Proposition}
\providecommand{\remarkname}{Remark}
\providecommand{\theoremname}{Theorem}
\begin{document}
\sloppy

\title{Analytic capacities in Besov spaces}

\author{Anton Baranov}
\address{Department of Mathematics and Mechanics, St. Petersburg State University, 28, Universitetskii prosp., 
198504 Staryi Petergof, Russia}
\email{anton.d.baranov@gmail.com}
\author{Michael Hartz}
\address{Fachrichtung Mathematik, Universit\"at des Saarlandes, 66123 Saarbr\"ucken, Germany}
\email{hartz@math.uni-sb.de}
\thanks{M.H. was partially supported by the
Emmy Noether Program of the German Research Foundation (DFG Grant 466012782).
The work of I.K. in Sections 5 and 6 was supported by Russian Science Foundation (grant 23-11-00153). The work of R.Z. was supported by the pilot center Ampiric, funded by the France 2030 Investment Program operated by the Caisse des D\'ep\^ots.}
\author{Ilgiz Kayumov}
\address{Department of Mathematics and Computer Science, St. Petersburg State University, 
Russia, 14 line of the VO, 29B, 199178 St. Petersburg, Russia}
\email{ikayumov@gmail.com}
\author{Rachid Zarouf}
\address{Aix-Marseille University, Laboratoire ADEF, Campus Universitaire de Saint-Jérôme,
52 Avenue Escadrille Normandie Niemen, 13013 Marseille, France}
\address{Aix Marseille University, University of Toulon, CNRS, CPT, Marseille, France}
\email{rachid.zarouf@univ-amu.fr}

\subjclass[2010]{Primary 30H25; Secondary 30J10, 47A60}

\begin{abstract}
We derive new estimates on analytic capacities of finite sequences
in the unit disc in Besov spaces with zero smoothness, which 
sharpen the estimates obtained by N.\,K.~Nikolski in 2005 
and, for a range of parameters, are optimal.
The work is motivated both from the perspective of complex analysis by the description of sets of 
zeros/uniqueness, and from the one of matrix analysis/operator theory 
by estimates on norms of inverses.
\end{abstract}

\maketitle

\section{Introduction}

Let $\mathbb{D}=\left\{ z\in\mathbb{C}:\:\abs{z}<1\right\} $ be the
open unit disk, let $\mathbb{T}=\left\{ z\in\mathbb{C}:\:\abs{z}=1\right\} $
be its boundary and $\mathbb{D}_{*}= \mathbb{D}\setminus\{0\}$. We denote
by $\mathcal{H}ol(\mathbb{D})$ the space of analytic functions on
$\mathbb{D}$, equipped with the topology of local uniform convergence.
Let $X$ be a Banach space that is continuously contained in $\mathcal{H}ol(\mathbb{D})$ and that contains the polynomials.
Given a finite sequence $\sigma=(\lambda_{1},\dots,\lambda_{N})\in\mathbb{D}_*^N$,
Nikolski \cite{NN1} defined
the $X$-zero capacity of $\sigma$ as
\[
{\rm cap}_{X}(\sigma)=\inf\{\Norm{f}{X}\::\:f(0)=1,\:f\vert\sigma=0\},
\]
where $f\vert\sigma=0$ means that $f(\lambda_{i})=0$ for all $i=1,\dots,N$
taking into account possible multiplicities. Namely, if 
$\sigma=(\lambda_{1},\dots,\lambda_{1},\lambda_{2},\dots,\lambda_{2},\dots,\lambda_{s},\dots,\lambda_{s})
\in\mathbb{D}^{N}$,
where each $\lambda_{i}$ is repeated according to its multiplicity
$m_{i}\geq1,$ then $f\vert\sigma=0$ means that 
\[
f(\lambda_{i})=f'(\lambda_{i})=f''(\lambda_{i})=\dots=f^{(m_{i}-1)}(\lambda_{i})=0,
\qquad i=1,\dots, s.
\]
The latter quantity is closely related on one
hand to the problem of uniqueness sets for the function space $X$
and on the other hand to condition numbers of large matrices and inverses,
as observed by Nikolski
\cite[Section 1]{NN1}. We briefly review these connections here.

\subsection{\label{subsec:Motivation1}Motivation from complex analysis: sets of
zeros/uniqueness}

From the point of view of complex analysis, the $X$-zero capacities are closely related
to the problem of characterizing uniqueness sets for the function space $X$;
here $\sigma$ is said to be a uniqueness set for $X$ if $f\in X,\:f\vert\sigma=0\implies f=0$.
Following \cite{NN1}, assume that the function space $X$ satisfies the following
Fatou property: if $f_{n}\in X$, $\sup_{n}\norm{f_{n}}{X}<\infty$
and $\lim_{n\to\infty}f_{n}(z)=f(z)$ for $z\in\mathbb{D}$, then $f\in X$.
Then it is not hard to see that an infinite sequence $\sigma=(\lambda_{i})_{i\geq1}\in\mathbb{D}_{*}^{\infty}$
is a uniqueness sequence for $X$ if and only if 
\begin{equation}
\sup_{N}\{{\rm cap}_{X}(\sigma_{N})\}=\infty,\label{eq:criteria_uniqu_zero_sets}
\end{equation}
where $\sigma_{N}=(\lambda_{i})_{i=1}^{N}$ is the truncation of $\sigma$ of order
$N$. For example, let $X$ be the algebra $H^\infty$ of
bounded holomorphic functions in $\mathbb{D}$ endowed with the norm
$\norm{f}{H^{\infty}}=\sup_{\zeta\in\mathbb{D}}\abs{f(\zeta)}$.
It is known \cite[Theorem 3.12]{NN1} that given $\sigma_{N}=(\lambda_{i})_{i=1}^{N}\in\mathbb{D}_*^N$,
\begin{equation}
{\rm cap}_{H^{\infty}}(\sigma_{N})=\frac{1}{\prod_{i=1}^{N}\abs{\lambda_{i}}}.\label{eq:cap_Hardy}
\end{equation}
Denoting by 
\[
B=B_{\sigma_{N}}=\prod_{i=1}^{N}\frac{z-\lambda_{i}}{1-\overline{\lambda_{i}}z}
\]
the finite Blaschke product associated with $\sigma_{N},$ observe
that the right-hand side in \eqref{eq:cap_Hardy} is achieved by the
test function $f=B/B(0)$, which is admissible for the conditions
in the infimum defining the capacity of $\sigma_{N}$. Thus, an application
of the above criterion \eqref{eq:criteria_uniqu_zero_sets} leads to
the well-known Blaschke condition: an infinite sequence $\sigma=(\lambda_{i})_{i\geq1}\in\mathbb{D}_{*}^{\infty}$
is a uniqueness sequence for $H^{\infty}$ if and only if 
\[
\sum_{i\geq1}(1-\abs{\lambda_{i}})=\infty.
\]

\subsection{\label{subsec:Motivation2}Motivation in operator theory/matrix analysis}

 Let $T$ be an invertible operator
acting on a Banach space or an $N\times N$ invertible matrix with
complex entries acting on $\mathbb{C}^{N}$ equipped with some norm.
We seek upper bounds on the norm of the inverse $T^{-1}$.
Assume that the minimal polynomial of $T$ is given by 
\[
m(z)=m_{T}(z)=\prod_{i=1}^{N}(z-\lambda_{i}),
\]
where $\sigma=(\lambda_{i})_{i=1}^{N}\in\mathbb{D}_{*}^{N}$ and we
assumed for simplicity that $\deg m_{T}=N$.
Following \cite{NN1},
assume that our Banach space $X \subset \mathcal{H}ol(\mathbb{D})$ is an in fact
an algebra, and write $A = X$. Assume further that
\begin{enumerate}
\item $T$ admits a $C$-functional calculus on $A$, i.e.\
  there exists a bounded homomorphism $f \mapsto f(T)$ extending the polynomial functional calculus
  and a constant $C > 0$ such that
\[
\Norm{f(T)}{}\leq C\Norm{f}{A},\qquad f\in A;
\]
\item the shift operator $S:f\mapsto zf$, the backward shift operator
  $S^*: f \mapsto \frac{f - f(0)}{z}$ and the generalized backward shift operators
$f\mapsto\frac{f-f(\lambda)}{z-\lambda}$ are bounded on $A$ for all $\lambda \in \mathbb{D}$.
\end{enumerate}
These assumptions are mild and satisfied by all the algebras $A$
considered below. Noticing that the analytic polynomial $P=\frac{m(0)-m}{zm(0)}$
interpolates the function $\frac{1}{z}$ on $\sigma$ we observe that
\[
T^{-1}=P(T)=(P+mh)(T)
\]
 for any $h\in A$. Applying assumption (1) to the above operator
we obtain
\[
\norm{T^{-1}}{}\leq C\norm{P+mh}{A}
\]
and taking the infimum over all $h\in A$ and using our assumptions on $A$, we get
\begin{equation}
\norm{T^{-1}}{}\leq C\inf \Big\{\Norm{g}{A}: \ g\vert\sigma=P\vert\sigma=\frac{1}{z}\Big\vert\sigma \Big\}.\label{eq:sharp_est_cap_inverse}
\end{equation}
Now, if $f \in A$ satisfies $f(0) = 1$ and $f \vert \sigma = 0$, then
$g:=S^*(1-f) = - S^*(f)$ is admissible for the last infimum, and so
\begin{equation}
\norm{T^{-1}}{}\leq C\norm{S^{*}}{A\to A} {\rm cap}_{A}(\sigma).\label{eq:link_inverse_cap}
\end{equation}
In particular \eqref{eq:sharp_est_cap_inverse} and \eqref{eq:link_inverse_cap}
are applied (among other situations) in \cite{NN1} to the cases of: 
\begin{itemize}
\item Hilbert space contractions, $A$ the disc algebra and $C=1$;
\item Banach space contractions, $A$ the Wiener algebra of absolutely
convergent Taylor/Fourier series,
\[
A=W=\{f=\sum_{k\geq0}\hat{f}(k)z^{k}\in\mathcal{H}ol(\mathbb{D}):\:\Norm{f}{W}=\sum_{k\geq0}\abs{\hat{f}(k)}<\infty\},
\]
and once again $C=1$;
\item Tadmor--Ritt type matrices or power-bounded matrices on Hilbert spaces and
$A$ the Besov algebra 
\[
A=\mathcal{B}_{\infty,1}^{0}=\left\{ f\in\mathcal{H}ol(\mathbb{D}):\:\Norm{f}{\mathcal{B}_{\infty,1}^{0}}=\abs{f(0)}+\int_{0}^{1}\Norm{f_{\rho}'}{L^{\infty}(\mathbb{T})}{\rm d}\rho<\infty\right\} ,
\]
where $f_{\rho}(\zeta)=f(\rho\zeta),\:\zeta\in\mathbb{T}$. 
\end{itemize}

\subsection*{Outline of the paper}

In Section \ref{sec:Known-results-and} below we first review Nikolski's
upper estimates on ${\rm cap}_{X}(\sigma)$ where $X$ is a general
Besov space $\mathcal{B}_{p,q}^{s},$ $s\geq0,\:(p,q)\in[1,\infty]^{2}$,
see below for their definition. We also relate the special case $(p,q)=(\infty,1)$ to applications
in operator theory/matrix analysis and especially to Schäffer's question
on norms of inverses.

In Section \ref{sec:Main-results} we formulate the main results of the
paper. Theorem \ref{thm:low_bd_infty_1},
which corresponds to the special case $(p,q)=(\infty,1)$, exhibits
an explicit sequence $\sigma^{\star}$ for which we derive a quantitative
lower bound on ${\rm cap}_{\mathcal{B}_{\infty,1}^{0}}(\sigma^{\star})$
and thereby $almost$ prove the sharpness of Nikolski's upper bound
in this case.
Theorem \ref{thm:Upper_bds}  improves Nikolski's upper bounds
on ${\rm cap}_{\mathcal{B}_{p,q}^{0}}(\sigma)$ for 
a range of parameters, while in Theorem \ref{thm:low_bd_p_q} the sharpness of these new bounds is discussed. 

In Section \ref{sec:Proof-of-Lemma} we prove Lemma \ref{lem:lower_bound},
which is our main tool for bounding the capacities from below. 
In Section \ref{sec:Proof-of-Theorem1} we prove Theorem
\ref{thm:low_bd_infty_1}.
The proofs of the lower bounds in Theorem \ref{thm:low_bd_p_q} are
provided in Section \ref{sec:Proof-of-th_low_bds}.
Finally, in Section
\ref{sec:Proofs-of-th_up_bds} we prove  the upper
bounds stated in Theorem \ref{thm:Upper_bds}. The proof is based on estimates of Besov norms
of finite Blaschke products (Proposition \ref{bln})
which may be of independent interest.
\subsection*{Acknowledgements}
We would like to thank the referee for carefully reading our manuscript and for constructive and helpful comments.

The first author is a winner of the “Junior Leader” contest conducted by the Foundation for the Advancement of Theoretical Physics and Mathematics “BASIS” and would like to thank its sponsors and jury.

\bigskip


\section{\label{sec:Known-results-and}Known results and open questions}


\subsection{Capacities in Besov spaces}
The case where $X$ is an analytic Besov space $X=\mathcal{B}_{p,q}^{s}$
is considered in \cite{NN1}. Let $s\geq0,$ $1\leq p,\,q\leq\infty$
and let 
\[
  \mathcal{B}_{p,q}^{s}=\left\{ f\in\mathcal{H}ol(\mathbb{D}):\:\Norm{f}{\mathcal{B}_{p,q}^{s}}^{*}=\left(\int_{0}^{1}((1-\rho)^{m-s-1/q}\Norm{f_{\rho}^{(m)}}{L^{p}(\mathbb{T})})^q{\rm d}\rho\right)^{1/q}<\infty\right\} ,
\]
where $f_{\rho}^{(m)}(\zeta)=f^{(m)}(\rho\zeta),$ $m$ being a nonnegative
integer such that $m>s$ (the choice of $m$ is not essential and the norms for different $m$-s are equivalent).
We need to  make the obvious modification for $q = \infty$. The space $\mathcal{B}_{p,q}^{s}$ equipped
with the norm 
\[
\Norm{f}{\mathcal{B}_{p,q}^{s}}=\sum_{k=0}^{m-1}\abs{f^{(k)}(0)}+\Norm{f}{\mathcal{B}_{p,q}^{s}}^{*}
\]
is a Banach space. We refer to \cite{BL,Pe,Tr}
for general properties of Besov spaces. 
Note that for $1\leq q < \infty$ we have $f_\rho \to f$ in the norm of 
$\mathcal{B}_{p,q}^{s}$ as $\rho\to 1-$.

In the present paper we deal with Besov spaces with zero smoothness $s=0$. In this case we take $m=1$ and
$$
\Norm{f}{\mathcal{B}_{p,q}^{0}}^{*}=\left(\int_{0}^{1} (1-\rho)^{q-1} \Norm{f'_{\rho}}{L^{p}
(\mathbb{T})}^q {\rm d}\rho\right)^{1/q}, \qquad 1\le q<\infty,
$$
$$
\Norm{f}{\mathcal{B}_{p,\infty}^{0}}^{*} = \sup_{0<\rho <1} (1-\rho)\|f'_\rho\|_{L^{p}(\mathbb{T})}.
$$
Note that $\mathcal{B}_{\infty,\infty}^{0}$ coincides with the classical Bloch space.


It is shown \cite[Theorem 3.26]{NN1} that given $1\leq p,\,q\leq\infty$,
$s>0$ and $\sigma\in\mathbb{D}_{*}^{N}$ the following upper estimate
holds
\[
{\rm cap}_{\mathcal{B}_{p,q}^{s}}(\sigma)\leq c\frac{N^{s}}{\prod_{i=1}^{N}\abs{\lambda_{i}}},
\]
 where $c=c(s,q),$ and that if $s=0$ then 
\begin{equation}
{\rm cap}_{\mathcal{B}_{p,q}^{0}}(\sigma)\leq c\frac{\left(\log N\right)^{1/q}}{\prod_{i=1}^{N}\abs{\lambda_{i}}},\label{eq:Nik_log_up_bd}
\end{equation}
where $c>0$ is a numerical constant. It is also shown that for $s>0$
these estimates are asymptotically sharp in the following sense \cite[Theorem 3.31]{NN1}:
there exist constants $c=c(s,p,q)>0$ and $K=K(s,p,q)>0$ such that
for any $\sigma=(\lambda_{1},\dots,\lambda_{N})\in\mathbb{D}_{*}^{N}$, $s>0$, $1\leq p,q\leq\infty$,
\[
{\rm cap}_{\mathcal{B}_{p,q}^{s}}(\sigma)\geq c\frac{N^{s}}{\prod_{i=1}^{N}\abs{\lambda_{i}}}\left(1+K-\prod_{i=1}^{N}(1+\abs{\lambda_{i}})\right).
\]
The sharpness of the upper bound in \eqref{eq:Nik_log_up_bd} is
left as an open question in \cite{NN1}. 

\subsection{Norms of inverses and Sch\"affer's question}

Let $\Norm{\cdot}{}$ denote the operator norm
induced on $\cM_{N}$, the space of complex $N \times N$ matrices, by a Banach space norm on $\mathbb{C}^N$. What is the
smallest constant $\mathcal{S}_{N}$ so that 
\[
\abs{\det{T}} \cdot \Norm{T^{-1}}{}\leq\mathcal{S}_{N}\Norm{T}{}{}^{N-1}
\]
holds for any invertible matrix $T\in\cM_{N}$ and any operator
norm $\Norm{\cdot}{}$?
Schäffer \cite[Theorem 3.8]{SJ} proved that 
\[
\mathcal{S}_{N}\leq\sqrt{eN},
\]
but he conjectured that $\mathcal{S}_N$ should in fact be bounded, as it is the case for Hilbert space.
This conjecture was disproved in the early 90's by E.~Gluskin,
M.~Meyer, and A.~Pajor \cite{GMP}.
Later, Queffélec~\cite{Qu4} showed that the $\sqrt{N}$ bound is essentially optimal
for arbitrary Banach spaces, but both arguments are non-constructive.
An explicit construction giving a $\sqrt{N}$ lower bound was recently given
in  \cite{SZ1}. For a detailed account on the history of Sch\"affer's question,
the reader is referred to \cite{SZ1}.
A key tool in the works cited above is the equality
\begin{align}
\mathcal{S}_{N}=\sup_{\left(\lambda_{1},\dots,\lambda_{N}\right)\in\mathbb{D}^{N}}
\prod_{i=1}^{N}\abs{\lambda_{i}}\left({\rm cap}_{W}\left(\lambda_{1},\dots,\lambda_{N}\right)-1\right),
\label{GMP_lemma}
\end{align}
due to Gluskin, Meyer and Pajor. It connects Sch\"affer's question to capacity in the Wiener algebra and shows
that \eqref{eq:link_inverse_cap} is essentially sharp in this case.

It is natural to consider Sch\"affer's question for operator classes different from Hilbert
or Banach space contractions. In particular, following \cite{NN1},
we may consider the following classes, which admit a Besov $\mathcal{B}^0_{\infty,1}$-functional calculus.

\begin{enumerate}
\item Power bounded operators on Hilbert space, i.e.\ operators $T$ on Hilbert space
  satisfying
\[
\sup_{k\geq0}\Norm{T^{k}}{}=C_{pb} <\infty.
\]
Peller \cite{VP} proved that $\Norm{f(T)}{}\leq k_{G}C_{pb}^{2}\Norm{f}{\mathcal{B}_{\infty,1}^{0}}$
for every analytic polynomial $f$, where $k_{G}$ is the Grothendieck constant.
Combining \eqref{eq:link_inverse_cap} with
Nikolski's upper estimate \eqref{eq:Nik_log_up_bd} for $q=1$, we obtain the upper bounds
\begin{equation}
\Norm{T^{-1}}{}\leq c_{1}\cdot{\rm cap}_{\mathcal{B}_{\infty,1}^{0}}\left(\lambda_{1},\dots,\lambda_{N}\right)\leq c_{3}\frac{k_{G}C_{pb}^{2}\log N}{\prod_{i=1}^{N}\abs{\lambda_{i}}},\label{eq:PB_hilb_schaf}
\end{equation}
where $c_{1}>0$ is an absolute constant and $(\lambda_{i})_{i=1}^{N}$
is the sequence of eigenvalues of $T$.
\item Tadmor--Ritt operators on Banach space, i.e.\ operators $T$ acting on a Banach space
and satisfying the resolvent estimate
\[
\sup_{\abs{\zeta}>1}\abs{\zeta-1}\Norm{(\zeta-T)^{-1}}{}=C_{TR} <\infty.
\]
According to P. Vitse's functional calculus \cite[Theorem 2.5]{PV}
we have $\Norm{f(T)}{}\leq300C_{TR}^{5}\Norm{f}{\mathcal{B}_{\infty,1}^{0}}$
for every analytic polynomial $f$, and following the same reasoning
as above this yields 
\begin{equation}
\Norm{T^{-1}}{}\leq c_{2}\cdot{\rm cap}_{\mathcal{B}_{\infty,1}^{0}}\left(\lambda_{1},\dots,\lambda_{N}\right)\leq c_{2}\frac{300C_{TR}^{5}\log N}{\prod_{i=1}^{N}\abs{\lambda_{i}}},\label{eq:TR_Schaf}
\end{equation}
where $c_{2}>0$ is an absolute constant. 
In fact, thanks to work of Schwenninger \cite{Schwenninger}, the dependence on $C_{TR}$ can
be improved from $C_{TR}^5$ to $C_{TR} (\log C_{TR} + 1)$.
\end{enumerate}
The sharpness of the right-hand side in \eqref{eq:PB_hilb_schaf}
and \eqref{eq:TR_Schaf} is an open question both from the point of
view of operators/matrices and from the one of capacities.
Note that we have the following (strict) inclusions:
\begin{equation}
W\subset \mathcal{B}_{\infty,1}^{0}\subset H^{\infty}\label{eq:embeddings}
\end{equation}
(see \cite{BL,Pe} or \cite[Section B.8.7]{NN2}). Observe that $\mathcal{B}_{\infty,1}^{0}$
is actually contained in the disc algebra. From the perspective of
capacities \eqref{eq:embeddings} implies that for any sequence $\sigma=\left(\lambda_{1},\dots,\lambda_{N}\right)\in\mathbb{D}_*^N$
we have
\begin{equation}
{\rm cap}_{H^{\infty}}(\sigma)\leq c_{3}{\rm cap}_{\mathcal{B}_{\infty,1}^{0}}(\sigma)\leq c_{4}{\rm cap}_{W}(\sigma)\label{eq:comparison_cap}
\end{equation}
where $c_3, c_4>0$ are absolute constants. Observe that in view of  \eqref{eq:comparison_cap} and \eqref{GMP_lemma} any sequence 
$\sigma\in\mathbb{D}_*^N$ such that $\prod_{i=1}^{N}\abs{\lambda_{i}} \cdot {\rm cap}_{\mathcal{B}_{\infty,1}^{0}}(\sigma)$
grows unboundedly in $N$ will automatically give a counterexample  to Schäffer's original question.

\section{\label{sec:Main-results}Main results }

Throughout this paper, we will use the following standard notation.
For two positive functions $f,g$
we say that $f$ is dominated by $g$, denoted by $f\lesssim g$,
if there is a constant $c>0$ such that $f\leq cg$ for all admissible variables. We say that $f$
and $g$ are comparable, denoted by $f\asymp g$, if both $f\lesssim g$ and $g\lesssim f$. 

The main goals of this paper are to 
\begin{enumerate}
\item
Provide an example of a sequence $\sigma^{\star}=\left(\lambda_{1},\dots,\lambda_{N}\right)\in\mathbb{D}_{*}^{N}$
such that  $\prod_{i=1}^{N}\abs{\lambda_{i}} \cdot {\rm cap}_{\mathcal{B}_{\infty,1}^{0}}(\sigma^{\star})$
$almost$ (up to a double logarithmic factor) approaches Nikolski's upper bound $\log N$.  
\item 
Improve Nikolski's upper bound \eqref{eq:Nik_log_up_bd} on $\prod_{i=1}^{N}\abs{\lambda_{i}}\cdot 
{\rm cap}_{\mathcal{B}_{p,q}^{0}}(\sigma)$
identifying three regions of $(p,q)\in[1,\infty]^{2}$ with a different behavior of this quantity (see Theorem \ref{thm:Upper_bds} below).
For all $(p,q)$ with $p\ne \infty$ our estimates give a smaller growth than the estimates in \cite{NN1}, and for a range 
of parameters, namely for $1\leq q\leq p <\infty$ and $p\geq2$, they are best possible.
\end{enumerate}


\subsection{\label{subsec:The-special-case}
A lower estimate on ${\rm cap}_{\mathcal{B}_{\infty,1}^{0}}(\sigma)$}

Our approach to bounding ${\rm cap}_{\mathcal{B}_{\infty,1}^{0}}(\sigma)$
from below
uses duality.
To estimate
${\rm cap}_{\mathcal{B}_{\infty,1}^{0}}(\sigma)$ from below, we estimate the
Besov seminorm in $\mathcal{B}_{1,\infty}^{0}$
of finite Blaschke products from above. The key inequality, which will be proved in Lemma \ref{lem:lower_bound}, is
\begin{equation}
{\rm cap}_{\mathcal{B}_{\infty,1}^{0}}(\sigma)\gtrsim\frac{1}{\prod_{i=1}^{N}\abs{\lambda_{i}}}\frac{1-\prod_{i=1}^{N}\abs{\lambda_{i}}^2}{\Norm{B}{\mathcal{B}_{1,\infty}^{0}}^{*}},
\label{eq:Bourgain_analog}
\end{equation}
where $\sigma=\left(\lambda_{1},\dots,\lambda_{N}\right)$ is an arbitrary
sequence in $\mathbb{D}_{*}^{N}$, and $B=B_{\sigma}$ is the finite
Blaschke product associated to $\sigma$. To conclude we consider
$n\geq2$ and for $k=1,\dots,n$ we put
\[
\sigma_{k}=(r_{k}^{(n)}e^{2i\pi j/2^{k}})_{j=1}^{2^{k}}\in \mathbb{D}_*^{2^k},
\qquad
r_{k}^{(n)}=\left(1-1/n\right)^{2^{-k}}.
\]
We put $N=\sum_{k=1}^{n}2^{k}\asymp2^{n}$ and define the sequence
$\sigma^{\star}=\left(\lambda_{1},\dots,\lambda_{N}\right)\in\mathbb{D}_{*}^{N}$
by 
\begin{equation}
\sigma^{\star}=(\sigma_{1},\,\sigma_{2},\,\dots,\sigma_{n}).
\label{eq:sigma_star}
\end{equation}
Denoting by $B^{\star}$ the Blaschke product associated with $\sigma^{\star}$
we have
\begin{equation}
\label{bsta}
B^{\star}(z)=\prod_{k=1}^{n}\frac{z^{2^{k}}-a}{1-az^{2^{k}}},
\end{equation}
where $a=1-\frac{1}{n}.$ We will prove the following result.

\begin{prop}
\label{prop:1}
The Blaschke product $B^{\star}$ satisfies
\begin{equation}
\|B^{\star}\|_{\mathcal{B}_{1,\infty}^{0}}^{*}\lesssim\frac{\log\log N}{\log N}.\label{eq:1}
\end{equation}
\end{prop}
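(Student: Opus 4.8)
The plan is to use the explicit product form \eqref{bsta}, namely $B^{\star}=\prod_{k=1}^{n}B_{k}$ with $B_{k}(z)=\dfrac{z^{2^{k}}-a}{1-az^{2^{k}}}$ and $a=1-\tfrac1n$, and to reduce the estimate of the $\mathcal{B}_{1,\infty}^{0}$-seminorm to a sum of one-factor contributions. Each $B_{k}$ is an inner function, so $|B_{j}|\le1$ on $\mathbb{D}$, and the Leibniz rule gives $|(B^{\star})'(z)|\le\sum_{k=1}^{n}|B_{k}'(z)|$ on $\mathbb{D}$; hence for every $\rho\in(0,1)$,
\[
(1-\rho)\,\|(B^{\star})'_{\rho}\|_{L^{1}(\mathbb{T})}\le(1-\rho)\sum_{k=1}^{n}\|(B_{k})'_{\rho}\|_{L^{1}(\mathbb{T})},
\]
so it suffices to bound the right-hand side by $\tfrac{\log n}{n}$ uniformly in $\rho$. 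This is enough because $N\asymp2^{n}$, so $\log N\asymp n$ and $\log\log N\asymp\log n$; moreover $\prod_{i=1}^{N}|\lambda_{i}|=(1-\tfrac1n)^{n}\asymp1$, so the triangle inequality above loses nothing relevant when \eqref{eq:1} is later combined with \eqref{eq:Bourgain_analog}.

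For a single factor, write $\varphi_{a}(w)=\frac{w-a}{1-aw}$, so that $B_{k}(z)=\varphi_{a}(z^{2^{k}})$ and $\varphi_{a}'(w)=\frac{1-a^{2}}{(1-aw)^{2}}$. With $t_{k}:=\rho^{2^{k}}$ this gives, for $\zeta\in\mathbb{T}$, $|B_{k}'(\rho\zeta)|=2^{k}\rho^{2^{k}-1}\frac{1-a^{2}}{|1-at_{k}\zeta^{2^{k}}|^{2}}$, and since $\int_{\mathbb{T}}g(\zeta^{2^{k}})\,|d\zeta|=\int_{\mathbb{T}}g(\zeta)\,|d\zeta|$ and $\int_{\mathbb{T}}\frac{|d\zeta|}{|1-s\zeta|^{2}}=\frac{2\pi}{1-s^{2}}$ for $|s|<1$, one obtains
\[
\|(B_{k})'_{\rho}\|_{L^{1}(\mathbb{T})}\asymp2^{k}\rho^{2^{k}-1}\,\frac{1-a^{2}}{1-a^{2}t_{k}^{2}}.
\]
Using $1-a=\tfrac1n$ we have $1-a^{2}\asymp\tfrac1n$ and $1-a^{2}t_{k}^{2}\asymp1-at_{k}=(1-t_{k})+\tfrac{t_{k}}{n}$.

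It remains to sum over $k$ uniformly in $\rho$. For $\rho<\tfrac12$ one has $t_{k}\le2^{-2^{k}}$, hence $1-a^{2}t_{k}^{2}\asymp1$ and $\rho^{2^{k}-1}\le2^{1-2^{k}}$, so $(1-\rho)\sum_{k}\|(B_{k})'_{\rho}\|_{L^{1}(\mathbb{T})}\lesssim\tfrac1n\sum_{k\ge1}2^{k}2^{-2^{k}}\lesssim\tfrac1n\lesssim\tfrac{\log n}{n}$. For $\rho\ge\tfrac12$, put $\delta=1-\rho$ and $m_{k}=2^{k}\delta$; then $\rho^{2^{k}-1}\asymp t_{k}$, with $t_{k}\asymp1$ when $m_{k}\le1$, and from $1-e^{-m_{k}}\le1-t_{k}\le m_{k}$ together with $t_{k}\le e^{-m_{k}}$ one checks that $\frac{t_{k}}{1-at_{k}}\asymp\min(n,1/m_{k})$ when $m_{k}\le1$ and $\frac{t_{k}}{1-at_{k}}\lesssim e^{-m_{k}}$ when $m_{k}>1$. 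Both cases are subsumed in the single bound $(1-\rho)\|(B_{k})'_{\rho}\|_{L^{1}(\mathbb{T})}\lesssim\frac1n\min(1,nm_{k})\,e^{-m_{k}/2}$, so that
\[
(1-\rho)\sum_{k=1}^{n}\|(B_{k})'_{\rho}\|_{L^{1}(\mathbb{T})}\lesssim\frac1n\sum_{k=1}^{n}\min(1,nm_{k})\,e^{-m_{k}/2}.
\]
Since $m_{k+1}=2m_{k}$, splitting according to $m_{k}\ge1$, $\tfrac1n\le m_{k}<1$, and $m_{k}<\tfrac1n$ gives respectively a convergent sum $\lesssim1$, at most $\log_{2}n+1$ terms each $\le1$, and $\sum nm_{k}\lesssim1$; altogether the sum is $\lesssim\log n$, hence $(1-\rho)\sum_{k}\|(B_{k})'_{\rho}\|_{L^{1}(\mathbb{T})}\lesssim\tfrac{\log n}{n}$. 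Taking the supremum over $\rho\in(0,1)$ yields \eqref{eq:1}.

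The first two paragraphs are routine; the delicate point will be the uniform-in-$\rho$ bookkeeping in the last step, in particular isolating the contribution of the borderline band $\tfrac1n\le2^{k}(1-\rho)<1$: it is precisely the $\asymp\log n$ indices in this band, each contributing $\asymp\tfrac1n$, that produce the numerator $\log\log N$ and denominator $\log N$ of \eqref{eq:1}. One should also verify that nothing worse happens when $\rho$ is very close to $1$ (many active indices $k\le n$, potentially all with $m_{k}\le1$) or when $\rho$ is small; these are absorbed by the exponential factor $e^{-m_{k}/2}$ and the truncation $m_{k}<\tfrac1n$, respectively.
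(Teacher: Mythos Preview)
Your proof is correct and follows essentially the same approach as the paper's: both use the Leibniz bound $|(B^{\star})'|\le\sum_k|B_k'|$, compute each $L^1$-contribution explicitly via $\int_{\mathbb{T}}|1-s\zeta|^{-2}|d\zeta|=2\pi/(1-s^2)$, and then split the sum over $k$ according to whether $2^{k}(1-\rho)$ is $\gtrsim 1$, between $1/n$ and $1$, or $<1/n$. Your unified bound $\tfrac1n\min(1,nm_k)e^{-m_k/2}$ packages the three ranges a bit more compactly than the paper's separate treatment of $S_1$ and $S_2$, but the decomposition and the identification of the $\asymp\log n$ indices in the middle band as the source of the $\log\log N$ factor are identical.
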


Taking into account that $\prod_{j=1}^{N}|\lambda_{j}|\le e^{-1}$
and combining \eqref{eq:Bourgain_analog} with \eqref{eq:1} we obtain
the following theorem.

\begin{thm}
\label{thm:low_bd_infty_1} Let $\sigma^{\star}\in\mathbb{D}_{*}^{N}$ and 
$B^{\star}$ be defined by \eqref{eq:sigma_star} and \eqref{bsta}. Then 
\[
\prod_{i=1}^{N}|\lambda_{i}|\cdot{\rm cap}_{\mathcal{B}_{\infty,1}^{0}}(\sigma^{\star})\gtrsim\frac{\log N}{\log\log N}.
\]
\end{thm}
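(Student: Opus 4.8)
The plan is to combine the two ingredients that the excerpt has already set up: the general duality-type lower bound \eqref{eq:Bourgain_analog}, namely
\[
{\rm cap}_{\mathcal{B}_{\infty,1}^{0}}(\sigma)\gtrsim\frac{1}{\prod_{i=1}^{N}\abs{\lambda_{i}}}\cdot\frac{1-\prod_{i=1}^{N}\abs{\lambda_{i}}^{2}}{\Norm{B}{\mathcal{B}_{1,\infty}^{0}}^{*}},
\]
applied to the particular sequence $\sigma^{\star}$ and its Blaschke product $B^{\star}$, together with the estimate $\Norm{B^{\star}}{\mathcal{B}_{1,\infty}^{0}}^{*}\lesssim\frac{\log\log N}{\log N}$ from Proposition \ref{prop:1}. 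So the theorem is essentially a bookkeeping corollary of those two results; the only real content is Lemma \ref{lem:lower_bound} and Proposition \ref{prop:1}, both of which I am allowed to assume.

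First I would record the elementary fact that for $\sigma^{\star}$ one has $\prod_{i=1}^{N}\abs{\lambda_{i}}\le e^{-1}$. Indeed, from the description $\sigma^{\star}=(\sigma_{1},\dots,\sigma_{n})$ with $\sigma_{k}$ consisting of $2^{k}$ points of modulus $r_{k}^{(n)}=(1-1/n)^{2^{-k}}$, the contribution of the block $\sigma_{k}$ to the product of moduli is $(r_{k}^{(n)})^{2^{k}}=1-1/n$, and hence $\prod_{i=1}^{N}\abs{\lambda_{i}}=(1-1/n)^{n}\le e^{-1}$. (One could also read this directly off \eqref{bsta}, since $B^{\star}(0)=\prod_{k=1}^{n}(-a)$ and $\abs{B^{\star}(0)}=a^{n}=(1-1/n)^{n}$, which equals $\prod_{i}\abs{\lambda_{i}}$ for a Blaschke product.) In particular $\prod_{i=1}^{N}\abs{\lambda_{i}}^{2}\le e^{-2}$, so $1-\prod_{i=1}^{N}\abs{\lambda_{i}}^{2}\ge 1-e^{-2}$ is bounded below by an absolute positive constant.

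Plugging these two bounds into \eqref{eq:Bourgain_analog} gives, for an absolute constant $c>0$,
\[
\prod_{i=1}^{N}\abs{\lambda_{i}}\cdot{\rm cap}_{\mathcal{B}_{\infty,1}^{0}}(\sigma^{\star})\gtrsim\frac{1-\prod_{i=1}^{N}\abs{\lambda_{i}}^{2}}{\Norm{B^{\star}}{\mathcal{B}_{1,\infty}^{0}}^{*}}\ge\frac{1-e^{-2}}{\Norm{B^{\star}}{\mathcal{B}_{1,\infty}^{0}}^{*}}\gtrsim\frac{1}{\Norm{B^{\star}}{\mathcal{B}_{1,\infty}^{0}}^{*}}.
\]
Now invoking Proposition \ref{prop:1} in the form $\Norm{B^{\star}}{\mathcal{B}_{1,\infty}^{0}}^{*}\lesssim\frac{\log\log N}{\log N}$, i.e.\ $\frac{1}{\Norm{B^{\star}}{\mathcal{B}_{1,\infty}^{0}}^{*}}\gtrsim\frac{\log N}{\log\log N}$, yields
\[
\prod_{i=1}^{N}\abs{\lambda_{i}}\cdot{\rm cap}_{\mathcal{B}_{\infty,1}^{0}}(\sigma^{\star})\gtrsim\frac{\log N}{\log\log N},
\]
which is exactly the claimed inequality.

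There is no serious obstacle in this particular argument: everything hard has been deferred to Lemma \ref{lem:lower_bound} (proved in Section \ref{sec:Proof-of-Lemma}) and Proposition \ref{prop:1} (the Besov-norm bound on $B^{\star}$). The only minor points to be careful about are: making sure $N\asymp 2^{n}$ is large enough that $\log\log N$ is positive and the $\asymp$'s do not hide any degeneracy for small $n$ (one simply assumes $n\ge 2$ as in the construction, and absorbs the finitely many small cases into the implied constant); and checking that the quantity $1-\prod\abs{\lambda_{i}}^{2}$ really is bounded away from $0$ uniformly in $n$, which is what the computation $\prod\abs{\lambda_{i}}=(1-1/n)^{n}\le e^{-1}$ guarantees. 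Hence the proof is just the concatenation of these estimates.
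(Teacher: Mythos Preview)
Your proof is correct and follows exactly the same route as the paper: apply Lemma~\ref{lem:lower_bound} (equivalently \eqref{eq:Bourgain_analog}) to $\sigma^{\star}$, use $\prod_i|\lambda_i|=a^n\le e^{-1}$ to bound the numerator $1-\prod_i|\lambda_i|^2$ away from zero, and then invoke Proposition~\ref{prop:1}. There is nothing to add.
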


As a consequence regarding Schäffer's question, Theorem \ref{thm:low_bd_infty_1}
implies (taking into account \eqref{eq:comparison_cap}) that
\[
\prod_{i=1}^{N}|\lambda_{i}|\cdot{\rm cap}_{W}(\sigma^{*})\gtrsim\frac{\log N}{\log\log N}.
\]
From this, following arguments in \cite{SZ1}, one obtains another explicit counterexample to Sch\"affer's question,
acting as multiplication by $z$ on the quotient $W / B^{\star} W$
of the Wiener algebra. One can identify the dual space of $W/ B^{\star} W$
with the space of rational functions of degree at most $N$
with poles at $1/\bar{\lambda}_{j}$
for $j=1,\dots, N$, equipped with the supremum norm of the Taylor coefficients.
Then, as in \cite[Theorem 8]{SZ1}, one obtains another explicit matrix that serves as a counterexample
to Sch\"affer's question.


\subsection{\label{subsec:Upper-bounds-on} Upper bounds on ${\rm cap}_{\mathcal{B}_{p,q}^{0}}(\sigma)$
for general values of $(p,q)\in[1,\infty]^{2}$}

In the following statements the constants in $\lesssim$ relations may depend on $p, q$, but not on $N$.

\begin{thm}
\label{thm:Upper_bds}
Given $(p,q)\in[1,\infty]^{2}$ and $\sigma=\left(\lambda_{1},\dots,\lambda_{N}\right)\in\mathbb{D}_{*}^{N}$,
the following upper estimates on ${\rm cap}_{\mathcal{B}_{p,q}^{0}}(\sigma)$ hold
depending on the region to which $(p,q)$ belongs. 

1) If $(p,q)\in[1,2]^{2}$ \textup(Region I\textup), then 
\[
{\rm cap}_{\mathcal{B}_{p,q}^{0}}(\sigma)\lesssim\frac{(\log N)^{1/q-1/2}}{\prod_{i=1}^{N}\abs{\lambda_{i}}}.
\]

2) If $1\leq p\leq q\leq\infty$ and $q\geq2$ \textup(Region II\textup), then 
\[
{\rm cap}_{\mathcal{B}_{p,q}^{0}}(\sigma)\lesssim\frac{1}{\prod_{i=1}^{N}\abs{\lambda_{i}}}.
\]

3) If $1\leq q\leq p\leq\infty$ and $p\geq2$ \textup(Region III\textup), then 
\[
{\rm cap}_{\mathcal{B}_{p,q}^{0}}(\sigma)\lesssim\frac{(\log N)^{1/q-1/p}}{\prod_{i=1}^{N}\abs{\lambda_{i}}}.
\]
\end{thm}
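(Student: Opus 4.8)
The plan is to establish all three bounds by testing the infimum defining ${\rm cap}_{\mathcal{B}_{p,q}^{0}}(\sigma)$ against the single competitor $f=B/B(0)$, where $B=B_{\sigma}=\prod_{i=1}^{N}\frac{z-\lambda_{i}}{1-\overline{\lambda_{i}}z}$. Since $f(0)=1$, $f\vert\sigma=0$, $|B(0)|=\prod_{i=1}^{N}|\lambda_{i}|$, and $B$ is rational and holomorphic across $\overline{\mathbb{D}}$ (so $f\in\mathcal{B}_{p,q}^{0}$), this yields
\[
{\rm cap}_{\mathcal{B}_{p,q}^{0}}(\sigma)\;\le\;\frac{\|B\|_{\mathcal{B}_{p,q}^{0}}}{\prod_{i=1}^{N}|\lambda_{i}|}\;=\;\frac{\prod_{i=1}^{N}|\lambda_{i}|+\|B\|_{\mathcal{B}_{p,q}^{0}}^{*}}{\prod_{i=1}^{N}|\lambda_{i}|}\;\le\;\frac{1+\|B\|_{\mathcal{B}_{p,q}^{0}}^{*}}{\prod_{i=1}^{N}|\lambda_{i}|}.
\]
Thus everything reduces to the Besov-seminorm estimates for finite Blaschke products collected in Proposition~\ref{bln}, which I would formulate as: for a finite Blaschke product $B$ of degree $N\ge2$,
\[
\|B\|_{\mathcal{B}_{p,q}^{0}}^{*}\;\lesssim_{p,q}\;(\log N)^{1/q-1/2}\ \text{(Region I)},\quad\lesssim_{p,q}1\ \text{(Region II)},\quad\lesssim_{p,q}(\log N)^{1/q-1/p}\ \text{(Region III)}.
\]
Because the exponent of $\log N$ is nonnegative in each region, the additive $1$ is absorbed and the stated bounds on ${\rm cap}_{\mathcal{B}_{p,q}^{0}}(\sigma)$ follow at once. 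For $p=\infty$ the claimed inequality is either Nikolski's \eqref{eq:Nik_log_up_bd} (Region III) or the trivial bound $\|B\|_{\mathcal{B}_{\infty,\infty}^{0}}^{*}\lesssim1$ (Region II), so one may assume $p<\infty$ throughout.

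To prove Proposition~\ref{bln} I would assemble three elementary facts about a degree-$N$ finite Blaschke product $B$. First, Schwarz--Pick: $|B'(z)|\le\frac{1-|B(z)|^{2}}{1-|z|^{2}}\le\frac{1}{1-|z|}$, hence $\|B'_{\rho}\|_{L^{\infty}(\mathbb{T})}\le(1-\rho)^{-1}$. Second, on $\mathbb{T}$ one computes $|B'(\zeta)|=\sum_{i=1}^{N}\frac{1-|\lambda_{i}|^{2}}{|\zeta-\lambda_{i}|^{2}}$ (a sum of Poisson kernels), so $\|B'\|_{L^{1}(\mathbb{T})}=N$; since $B'\in H^{\infty}$, subharmonicity of $|B'|$ gives $\|B'_{\rho}\|_{L^{1}(\mathbb{T})}\le N$ for all $\rho\in(0,1)$, and interpolating $L^{p}$ between $L^{1}$ and $L^{\infty}$ gives the near-boundary estimate $\|B'_{\rho}\|_{L^{p}(\mathbb{T})}\le N^{1/p}(1-\rho)^{1/p-1}$, $1\le p<\infty$. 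Third, Parseval: with $\int_{0}^{1}(1-\rho)\rho^{2k-2}\,d\rho=\frac{1}{2k(2k-1)}$ and $\|B\|_{L^{2}(\mathbb{T})}=1$,
\[
\bigl(\|B\|_{\mathcal{B}_{2,2}^{0}}^{*}\bigr)^{2}=\int_{0}^{1}(1-\rho)\,\|B'_{\rho}\|_{L^{2}(\mathbb{T})}^{2}\,d\rho=\sum_{k\ge1}\frac{k}{2(2k-1)}\,|\widehat{B}(k)|^{2}\le\tfrac12,
\]
an $O(1)$ bound for the diagonal seminorm at exponent $2$; combining it with the trivial inequality $\|B'_{\rho}\|_{L^{r}(\mathbb{T})}^{r}\le\|B'_{\rho}\|_{L^{\infty}(\mathbb{T})}^{r-2}\|B'_{\rho}\|_{L^{2}(\mathbb{T})}^{2}$ and Schwarz--Pick upgrades it to $\|B\|_{\mathcal{B}_{r,r}^{0}}^{*}\lesssim1$ for every $r\ge2$.

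Each region is then a short computation. In Region~II ($p\le q$, $q\ge2$), $L^{q}(\mathbb{T})\hookrightarrow L^{p}(\mathbb{T})$ gives $\|B\|_{\mathcal{B}_{p,q}^{0}}^{*}\le\|B\|_{\mathcal{B}_{q,q}^{0}}^{*}\lesssim1$ (and the case $q=\infty$ follows directly from Schwarz--Pick). In Region~III ($q\le p$, $p\ge2$, $p<\infty$), use $\|B\|_{\mathcal{B}_{p,p}^{0}}^{*}\lesssim1$ as an anchor and split $[0,1]=[0,1-1/N]\cup[1-1/N,1]$: the tail contributes $\int_{1-1/N}^{1}(1-\rho)^{q-1}\bigl(N^{1/p}(1-\rho)^{1/p-1}\bigr)^{q}\,d\rho=p/q$, while on $[0,1-1/N]$ one writes $(1-\rho)^{q-1}\|B'_{\rho}\|_{L^{p}}^{q}=\bigl[(1-\rho)^{p-1}\|B'_{\rho}\|_{L^{p}}^{p}\bigr]^{q/p}(1-\rho)^{q/p-1}$ and applies H\"older with conjugate exponents $p/q$ and $p/(p-q)$; since $(q/p-1)\cdot\frac{p}{p-q}=-1$, the second factor becomes $\bigl(\int_{0}^{1-1/N}(1-\rho)^{-1}\,d\rho\bigr)^{(p-q)/p}=(\log N)^{1-q/p}$ and the first is $\le\bigl(\|B\|_{\mathcal{B}_{p,p}^{0}}^{*}\bigr)^{q}\lesssim1$, so this part is $\lesssim(\log N)^{1-q/p}$; altogether $\|B\|_{\mathcal{B}_{p,q}^{0}}^{*}\lesssim(\log N)^{1/q-1/p}$. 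In Region~I ($p,q\le2$), $L^{2}(\mathbb{T})\hookrightarrow L^{p}(\mathbb{T})$ reduces matters to $\mathcal{B}_{2,q}^{0}$, and the identical splitting at $1-1/N$ with the Parseval anchor $\|B\|_{\mathcal{B}_{2,2}^{0}}^{*}\le2^{-1/2}$ and H\"older exponents $2/q$, $2/(2-q)$ gives $\|B\|_{\mathcal{B}_{2,q}^{0}}^{*}\lesssim(\log N)^{1/q-1/2}$.

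I expect the main obstacle to be the precise calibration of this H\"older step. The exponent identity $(q/p-1)\cdot\frac{p}{p-q}=-1$ forces the auxiliary factor to be exactly $\int(1-\rho)^{-1}\,d\rho$, which diverges over all of $[0,1]$; it is the truncation at the sharp scale $1-\rho\asymp1/N$ — made affordable precisely by the crude near-boundary bound $\|B'_{\rho}\|_{L^{p}(\mathbb{T})}\lesssim N^{1/p}(1-\rho)^{1/p-1}$, which keeps the tail integral bounded — that converts this divergence into the factor $(\log N)^{1-q/p}$ (respectively $(\log N)^{1-q/2}$), and any coarser cutoff destroys the power of $\log N$. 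Equally delicate, though elementary, is the point that the Parseval identity alone already supplies the required $O(1)$ anchor at every exponent $\ge2$ — via its trivial interpolation against the Schwarz--Pick sup bound — so that the deeper Littlewood--Paley embedding $\int_{\mathbb{D}}(1-|z|^{2})^{r-1}|B'|^{r}\,dA\lesssim\|B\|_{H^{r}}^{r}$, which one might be tempted to invoke, is not actually needed; keeping the argument at this level is what makes the constants transparent.
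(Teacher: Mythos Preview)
Your proposal is correct and follows essentially the same route as the paper: test with $f=B_\sigma/B_\sigma(0)$ and reduce to Besov seminorm bounds for finite Blaschke products, proved via the Parseval anchor at $p=q=2$, the Schwarz--Pick upgrade to $p=q\ge2$, the near-boundary bound $\|B'_\rho\|_{L^p}\lesssim N^{1/p}(1-\rho)^{1/p-1}$, and the H\"older splitting at $1-1/N$.

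Two minor differences are worth recording. For $p=\infty$ in Region~III the paper does not simply quote \eqref{eq:Nik_log_up_bd} but reproves it with the dilated test function $f(z)=(-1)^N\tilde B(rz)/(r^N\prod\lambda_i)$, $r=1-1/N$, where $\tilde B$ has zeros $r\lambda_i$; your shortcut of invoking Nikolski's bound is of course legitimate since that inequality is stated as known. In Region~I the paper first establishes the diagonal estimate $\|B\|^*_{\mathcal B^0_{p,p}}\lesssim(\log N)^{1/p-1/2}$ for $1\le p\le2$ by interpolating between $p=1$ (obtained via Cauchy--Schwarz against the Parseval bound) and $p=2$, and then runs the H\"older step; your variant---reducing immediately to $p=2$ via $\|\cdot\|_{L^p(\mathbb T)}\le\|\cdot\|_{L^2(\mathbb T)}$ and applying the Region~III argument with anchor $\|B\|^*_{\mathcal B^0_{2,2}}\lesssim1$---is a shade more direct and bypasses the $p=1$ diagonal case entirely.
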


\begin{rem*}
The upper bound in part 2 of Theorem \ref{thm:Upper_bds}  is attained by
any sequence $\sigma=\left(\lambda_{1},\dots,\lambda_{N}\right)\in\mathbb{D}_{*}^{N}$
such that $\abs{\lambda_{i}}\ge 1-1/N$ for all $i=1,\dots, N$.
\end{rem*}


\subsection{\label{subsec:Quantitive-lower-estimates} Lower estimates
on ${\rm cap}_{\mathcal{B}_{p,q}^{0}}(\sigma^{\star})$}

In the following theorem we derive quantitive lower estimates on ${\rm cap}_{\mathcal{B}_{p,q}^{0}}(\sigma^{\star})$
for $1\leq q\leq p\leq\infty$. This proves,
in particular, the sharpness of Theorem \ref{thm:Upper_bds} for $(p,q)$ in Region III if $p <\infty$.

\begin{thm}
\label{thm:low_bd_p_q} 
Let $\sigma^{\star}\in\mathbb{D}_{*}^{N}$ and $B^{\star}$ be defined by \eqref{eq:sigma_star} and \eqref{bsta},
and let $(p,q)\in[1,\infty]^{2}$ be such that $1\leq q\leq p\leq\infty$. Then
\begin{align}
  \prod_{i=1}^{N}|\lambda_{i}|\cdot{\rm cap}_{\mathcal{B}_{p,q}^{0}}(\sigma^{\star})&\gtrsim(\log N)^{1/q-1/p},
\qquad p<\infty,
\label{eq:low_bd_p_q} \\
  \prod_{i=1}^{N}|\lambda_{i}|\cdot{\rm cap}_{\mathcal{B}_{\infty,q}^{0}}(\sigma^{\star}) &\gtrsim
\frac{(\log N)^{1/q}}{\log\log N}.
\label{tte1}
\end{align}
In particular, for $(p,q)$ in Region III and $p<\infty$, 
\begin{equation}
\prod_{i=1}^{N}|\lambda_{i}|\cdot{\rm cap}_{\mathcal{B}_{p,q}^{0}}(\sigma^{\star})\asymp(\log N)^{1/q-1/p}.
\label{tte2}
\end{equation}
\end{thm}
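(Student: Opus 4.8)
The plan is to combine the duality lower bound for the capacity with a direct upper estimate of the Besov seminorm $\|B^{\star}\|_{\mathcal{B}^{0}_{p',q'}}^{*}$ of the Blaschke product \eqref{bsta}, where $1/p+1/p'=1$ and $1/q+1/q'=1$. Since every $g\in\mathcal{B}^{0}_{p',q'}$ induces a bounded linear functional on $\mathcal{B}^{0}_{p,q}$, the duality argument behind \eqref{eq:Bourgain_analog} applies verbatim with the pair $(p',q')$ in place of $(1,\infty)$, so Lemma \ref{lem:lower_bound} gives, for an arbitrary $\sigma=(\lambda_{1},\dots,\lambda_{N})\in\mathbb{D}_{*}^{N}$ with Blaschke product $B$,
\[
{\rm cap}_{\mathcal{B}^{0}_{p,q}}(\sigma)\gtrsim\frac{1}{\prod_{i=1}^{N}|\lambda_{i}|}\cdot\frac{1-\prod_{i=1}^{N}|\lambda_{i}|^{2}}{\|B\|_{\mathcal{B}^{0}_{p',q'}}^{*}}.
\]
For $\sigma=\sigma^{\star}$ we have $\prod_{i=1}^{N}|\lambda_{i}|=(1-1/n)^{n}$, which lies between an absolute positive constant and $e^{-1}$, whence $1-\prod|\lambda_{i}|^{2}\asymp1$. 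Writing $u=p'$, $v=q'$ (so $1\le u\le v\le\infty$) and recalling $N\asymp2^{n}$, $\log N\asymp n$, $1/p-1/q=1/v-1/u$ and $1/q=1-1/v$, it therefore suffices to prove
\[
\|B^{\star}\|_{\mathcal{B}^{0}_{u,v}}^{*}\lesssim\begin{cases}n^{1/v-1/u}, & u>1,\\ (\log n)\,n^{1/v-1}, & u=1.\end{cases}
\]

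To estimate the seminorm we use the factorization $B^{\star}=\prod_{k=1}^{n}B_{k}$, $B_{k}(z)=(z^{2^{k}}-a)/(1-az^{2^{k}})$, $a=1-1/n$. Differentiating the product and using $|B_{j}|\le1$ on $\mathbb{D}$ gives the pointwise bound $|(B^{\star})'(z)|\le\sum_{k=1}^{n}|B_{k}'(z)|$, with $|B_{k}'(z)|=(1-a^{2})\,2^{k}|z|^{2^{k}-1}|1-az^{2^{k}}|^{-2}$. Fix $\rho\in(0,1)$ and put $t=1-\rho$. On the circle $|z|=\rho$, the substitution $\zeta^{2^{k}}=e^{i\beta}$, which traverses $\mathbb{T}$ exactly $2^{k}$ times, together with $|1-a\rho^{2^{k}}e^{i\beta}|^{2}\asymp(1-a\rho^{2^{k}})^{2}+\beta^{2}$, yields for $u\ge1$
\[
\|(B_{k}')_{\rho}\|_{L^{u}(\mathbb{T})}\asymp\frac{2^{k}}{n}\,\rho^{2^{k}-1}\,\delta_{k}^{1/u-2},\qquad\delta_{k}:=1-a\rho^{2^{k}}.
\]
Using $\rho^{2^{k}-1}\lesssim e^{-2^{k}t}$ and $\delta_{k}\asymp\max\!\bigl(1/n,\min(1,2^{k}t)\bigr)$ one isolates three regimes for each $k$: if $t\lesssim2^{-k}/n$ then $\|(B_{k}')_{\rho}\|_{L^{u}}\asymp2^{k}n^{1-1/u}$; if $2^{-k}/n\lesssim t\lesssim2^{-k}$ then $\|(B_{k}')_{\rho}\|_{L^{u}}\asymp2^{k(1/u-1)}n^{-1}t^{1/u-2}$; and if $t\gtrsim2^{-k}$ then $\|(B_{k}')_{\rho}\|_{L^{u}}\lesssim2^{k}n^{-1}e^{-2^{k}t}$. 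Summing over $k$ at a fixed $t$: the first and third regimes contribute geometric sums of size $\lesssim n^{-1/u}/t$ (capped at $\lesssim2^{n}n^{1-1/u}$ once $t\lesssim2^{-n}/n$); the second regime, whose admissible indices form the window $2^{k}\in[(nt)^{-1},t^{-1}]$ of length $\asymp\log_{2}n$, contributes a geometric sum of size $\asymp n^{-1/u}/t$ when $u>1$, but an \emph{arithmetic} sum of size $\asymp(\log n)/(nt)$ when $u=1$, since then the summand $1/(nt)$ does not depend on $k$. Hence, when $u>1$,
\[
\|(B^{\star})'_{1-t}\|_{L^{u}(\mathbb{T})}\lesssim\begin{cases}2^{n}n^{1-1/u}, & 0<t\lesssim2^{-n}/n,\\ n^{-1/u}/t, & 2^{-n}/n\lesssim t\lesssim1/n,\\ n^{-1}t^{1/u-2}, & 1/n\lesssim t\le1,\end{cases}
\]
and for $u=1$ the same bounds hold with an extra factor $\log n$ on $2^{-n}/n\lesssim t\lesssim1/n$ and an extra factor $\log(e/t)$ on $1/n\lesssim t\le1$.

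Plugging these into
\[
\bigl(\|B^{\star}\|_{\mathcal{B}^{0}_{u,v}}^{*}\bigr)^{v}=\int_{0}^{1}t^{v-1}\,\|(B^{\star})'_{1-t}\|_{L^{u}(\mathbb{T})}^{v}\,dt
\]
(with $\sup_{t}t\,\|(B^{\star})'_{1-t}\|_{L^{u}}$ replacing the integral when $v=\infty$) and splitting the $t$-range at $2^{-n}/n$, $2^{-n}$, $1/n$, $1/2$, all resulting integrals are elementary. When $u>1$ the dominant contribution comes from $2^{-n}/n\lesssim t\lesssim1/n$ and equals $n^{-v/u}\log(2^{n})\asymp n^{1-v/u}$, giving $\|B^{\star}\|_{\mathcal{B}^{0}_{u,v}}^{*}\asymp n^{1/v-1/u}$; by the first paragraph this yields \eqref{eq:low_bd_p_q}, and together with part 3 of Theorem \ref{thm:Upper_bds} it yields \eqref{tte2} for $(p,q)$ in Region III with $p<\infty$. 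When $u=1$ the dominant contribution comes from $2^{-n}\lesssim t\lesssim1/n$ and equals $(\log n)^{v}n^{-v}\cdot n=(\log n)^{v}n^{1-v}$, so that $\|B^{\star}\|_{\mathcal{B}^{0}_{1,v}}^{*}\asymp(\log n)\,n^{1/v-1}\asymp(\log\log N)(\log N)^{-1/q}$, which is \eqref{tte1}.

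The heart of the matter, and the main technical obstacle, is the per-scale summation $\sum_{k}\|(B_{k}')_{\rho}\|_{L^{u}}$: the naive triangle-inequality bound $\|B^{\star}\|_{\mathcal{B}^{0}_{u,v}}^{*}\le\sum_{k=1}^{n}\|B_{k}\|_{\mathcal{B}^{0}_{u,v}}^{*}\asymp n\cdot n^{-1/u}$ overshoots by the factor $n^{1/v}$ and is useless here. One must exploit that $B_{k}'$ is concentrated at the radial scale $1-\rho\asymp2^{-k}/n$, so that at any fixed $\rho$ only $O(1)$ of the factors are of maximal size --- except in the endpoint case $u=1$ (that is, $p=\infty$), where $\asymp\log n$ consecutive indices overlap and produce the additional factor $\log\log N$. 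Tracking the widths of these overlap windows, together with the capping near $t\asymp2^{-n}/n$, is the delicate point; the resulting estimate of $\|B^{\star}\|_{\mathcal{B}^{0}_{u,v}}^{*}$ recovers and extends Proposition \ref{prop:1} and is close in spirit to Proposition \ref{bln}.
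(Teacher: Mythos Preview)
Your argument is correct and follows the paper's overall two-step strategy: apply Lemma~\ref{lem:lower_bound} to reduce the lower bounds \eqref{eq:low_bd_p_q} and \eqref{tte1} to the upper estimate $\|B^{\star}\|_{\mathcal{B}^{0}_{p',q'}}^{*}\lesssim(\log N)^{1/p-1/q}$ (with an extra $\log\log N$ when $p'=1$), which is exactly Proposition~\ref{jer}, and then invoke Theorem~\ref{thm:Upper_bds} for \eqref{tte2}. The difference is in how you prove the seminorm estimate. The paper first treats $v=\infty$, $1<u\le 2$ via the pointwise inequality $(\sum_k a_k)^u\le(\sum_k a_k^{u/2})^2$ and a Forelli--Rudin bound on the resulting double sum, extends to $u>2$ by Schwarz--Pick, and then passes to finite $v$ by integrating the $v=\infty$ bound with a cut at $1-1/N^{2}$. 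You instead bound $\|(B^{\star})'_{\rho}\|_{L^{u}}\le\sum_{k}\|(B_{k}')_{\rho}\|_{L^{u}}$ by Minkowski, compute each term exactly, and analyse the sum through the three-regime decomposition $\delta_{k}\asymp\max(1/n,\min(1,2^{k}(1-\rho)))$. This is more elementary, works uniformly for all $u\ge1$ without the $u\le2$/$u>2$ split, and is in fact the natural extension to $L^{u}$ of the paper's own proof of Proposition~\ref{prop:1} (the case $u=1$); the paper chose a different, somewhat heavier route for $u>1$. What your approach buys is a unified calculation; what the paper's buys is a cleaner bootstrap from $v=\infty$ to general $v$.
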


However, for  $1\leq q\leq p < 2$ there is still a certain gap between the upper and lower estimates for the capacities:
\[
(\log N)^{1/q-1/p}\lesssim\prod_{i=1}^{N}|\lambda_{i}|\cdot{\rm cap}_{\mathcal{B}_{p,q}^{0}}(\sigma^{\star})\lesssim(\log N)^{1/q-1/2}.
\]
Let us consider the diagonal case $1 \le q = p < 2$. Rudin \cite{Rudin55}
showed that there exists a Blaschke product that is not contained in $\mathcal{B}_{1,1}^0$, see also \cite{Piranian68}.
Vinogradov \cite[Theorem 3.11]{Vinogradov95} extended Rudin's result to $\mathcal{B}_{p,p}^0$ for $p \in (0,2)$.
These results perhaps suggest that the expression in the middle might be unbounded for $1 \le q = p < 2$.
Indeed, unboundedness would follow if we knew that there are Blaschke sequences that are not zero sets for $\mathcal{B}_{p,p}^0$.
However, the existence of such Blaschke sequences appears to be an open question.
Results about zero sets for $\mathcal{B}_{p,p}^0$, also for $p > 2$, can be found in \cite{GP06}.

Instead, we will give a different, qualitative argument showing that, in case $1 \le q=p < 2$, the expression in the middle may be unbounded.
\begin{thm}
  \label{thm:Bpp_lower_bound}
  For each $N \in \mathbb{N}$ there exists a finite sequence $\sigma_N \in \mathbb{D}_*^N$
  such that for all $1 \le p < 2$, we have
  \begin{equation*}
    \lim_{N \to \infty} \prod_{\lambda \in \sigma_N} |\lambda| \cdot {\rm cap}_{\mathcal{B}^0_{p,p}}(\sigma_N) = \infty.
  \end{equation*}
\end{thm}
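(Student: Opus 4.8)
The plan is to realise the sequences $\sigma_N$ as truncated zero sets of a suitably chosen block Blaschke product, to use that such products fail to lie in $\mathcal{B}^0_{p,p}$ (Rudin \cite{Rudin55}, Vinogradov \cite{Vinogradov95}), and to convert this non-membership into a lower bound for the capacity by a symmetrisation argument. Concretely, I would start from a Blaschke product of the form
\[
B(z)=\prod_{k\ge 1}\frac{z^{n_k}-a_k}{1-a_kz^{n_k}},\qquad n_k\uparrow\infty,\ a_k\in(0,1),
\]
as in \eqref{bsta}, chosen so that (i) the zeros form a Blaschke sequence, so $\prod_{\lambda\in\sigma_M}|\lambda|\ge\delta>0$ for the zero set $\sigma_M$ of the $M$-th partial product $B_M$, and (ii) the "defects" $a_k$ decay slowly enough (slower than $(\log k)^{-1/2}$ in the relevant lacunary scale) that $B\notin\mathcal{B}^0_{p,p}$ for \emph{every} $p\in[1,2)$; equivalently $\|B_M\|^{*}_{\mathcal{B}^0_{p,p}}\to\infty$ as $M\to\infty$, for each such $p$. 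One then sets $\sigma_N:=\sigma_N$ (the $N$-term truncation), so that $\prod_{\lambda\in\sigma_N}|\lambda|\to\delta>0$ and it remains to prove $\prod_{\lambda\in\sigma_N}|\lambda|\cdot{\rm cap}_{\mathcal{B}^0_{p,p}}(\sigma_N)\to\infty$ for all $p<2$ at once. (If a single universally bad $B$ is not available directly, one can instead take a dense sequence $(p_m)\subset[1,2)$, block products bad on intervals covering $[1,2)$, and concatenate progressively longer initial segments of their zero sets while keeping $\prod_{\lambda\in\sigma_N}|\lambda|$ bounded below; since ${\rm cap}_{X}$ is monotone under enlarging the point set, each fixed $p$ is then handled by the block containing it.)

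\textbf{Main step: non-membership $\Rightarrow$ capacity lower bound.} Every $f$ admissible for ${\rm cap}_{\mathcal{B}^0_{p,p}}(\sigma_N)$ factors as $f=B_N g$ with $g$ holomorphic and $g(0)=1/\prod_{\lambda\in\sigma_N}|\lambda|$, so that
\[
\prod_{\lambda\in\sigma_N}|\lambda|\cdot{\rm cap}_{\mathcal{B}^0_{p,p}}(\sigma_N)=\inf\bigl\{\|B_N g\|_{\mathcal{B}^0_{p,p}}:g\in\mathcal{H}ol(\mathbb{D}),\ g(0)=1\bigr\}\le\|B_N\|^{*}_{\mathcal{B}^0_{p,p}}+|B_N(0)|.
\]
I would prove that, for these configurations, the constant function $1$ is essentially extremal, i.e. $\|B_N g\|^{*}_{\mathcal{B}^0_{p,p}}\gtrsim |g(0)|\,\|B_N\|^{*}_{\mathcal{B}^0_{p,p}}$ for all such $g$. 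The mechanism is symmetrisation: if the exponents $n_k$ are chosen highly divisible, $B_N$ is invariant under a large cyclic group $\Gamma_N$ of rotations; since each rotation is an isometry of $\mathcal{B}^0_{p,p}$, averaging gives $P_{\Gamma_N}(B_N g)=B_N\cdot P_{\Gamma_N}g$ with $(P_{\Gamma_N}g)(0)=g(0)$ and $\|B_N\cdot P_{\Gamma_N}g\|_{\mathcal{B}^0_{p,p}}\le\|B_N g\|_{\mathcal{B}^0_{p,p}}$. One is thus reduced to competitors $B_N h$ with $h$ supported on the characters of $\Gamma_N$ and $h(0)=1$, and for such $h$ the block structure of $B_N$ forces the Besov seminorm of $B_N h$ to dominate that of $B_N$ up to a constant. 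I expect this last estimate — ruling out cancellation in $\|B_N h\|^{*}_{\mathcal{B}^0_{p,p}}$ between the block Blaschke product and a symmetric correction factor — to be the principal technical obstacle; it should be amenable to a scale-by-scale Littlewood--Paley decomposition of the kind used to prove Proposition~\ref{prop:1} and Theorem~\ref{thm:low_bd_p_q}. Combining this with $\|B_N\|^{*}_{\mathcal{B}^0_{p,p}}\to\infty$ yields $\prod_{\lambda\in\sigma_N}|\lambda|\cdot{\rm cap}_{\mathcal{B}^0_{p,p}}(\sigma_N)\to\infty$.

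\textbf{Remarks on the obstacle.} The point where care is essential is precisely this near-extremality of the block Blaschke product: without the symmetry one is asking whether the zero set of $B$ is a uniqueness set for $\mathcal{B}^0_{p,p}$, which is the open problem mentioned above. An alternative to the direct symmetrisation would be to invoke the duality bound of Lemma~\ref{lem:lower_bound} (in the general form ${\rm cap}_{\mathcal{B}^0_{p,p}}(\sigma)\gtrsim(1-\prod|\lambda|^2)\bigl(\prod|\lambda|\cdot\|B\|^{*}_{\mathcal{B}^0_{p',p'}}\bigr)^{-1}$, $1/p+1/p'=1$), reducing everything to showing $\|B_N\|^{*}_{\mathcal{B}^0_{p',p'}}\to 0$ for the conjugate exponent $p'>2$; but estimating this dual seminorm (it is $\asymp\int_{\mathbb{D}}|B_N'|^{p'}(1-|z|)^{p'-1}\,dA$, with $|B_N'|$ comparable to the Schwarz--Pick bound near the zero circles) appears to be of the same order of difficulty, and I would regard the two routes as interchangeable, with the symmetrisation one being the more direct. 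Finally, the uniformity over all $p\in[1,2)$ is built in: the block product (or the concatenation scheme) is fixed once and for all, and the argument above is carried out with $p$ fixed but arbitrary.
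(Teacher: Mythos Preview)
Your symmetrisation route has a genuine gap at exactly the point you flag as the ``principal technical obstacle'': you need $\|B_N h\|^{*}_{\mathcal{B}^0_{p,p}}\gtrsim\|B_N\|^{*}_{\mathcal{B}^0_{p,p}}$ for every $\Gamma_N$-invariant $h$ with $h(0)=1$, and nothing in the sketch indicates how to obtain this. Averaging over the rotation group only restricts the competitor $h$ to a sparse arithmetic progression of frequencies; it does not prevent such an $h$ from cancelling the Besov contributions of $B_N$ scale by scale. In effect, after symmetrising you are still asking whether the (rotation-invariant) zero set of $B_N$ is a set of uniqueness for the $\Gamma_N$-invariant part of $\mathcal{B}^0_{p,p}$, which is the open problem in symmetric disguise. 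A Littlewood--Paley decomposition lets one \emph{compute} $\|B_N\|^{*}$, but furnishes no lower bound for $\|B_N h\|^{*}$ without additional input.

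The route that works is precisely the dual one you dismiss. The missing ingredient is a two-line interpolation: for any Blaschke product $B$ and any $q>2$,
\[
\bigl(\|B\|^{*}_{\mathcal{B}^0_{q,q}}\bigr)^{q}
=\int_0^1(1-\rho)^{q-1}\|B'_\rho\|_{L^q}^{q}\,d\rho
\le\bigl(\|B\|^{*}_{\mathcal{B}^0_{\infty,\infty}}\bigr)^{q-2}\int_0^1(1-\rho)\|B'_\rho\|_{L^2}^{2}\,d\rho
\lesssim\bigl(\|B\|^{*}_{\mathcal{B}^0_{\infty,\infty}}\bigr)^{q-2},
\]
using $\mathcal{B}^0_{2,2}\simeq H^2$ and $\|B\|_{H^2}=1$. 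Thus $\|B\|^{*}_{\mathcal{B}^0_{p',p'}}$ is small, for every $p'>2$ simultaneously, as soon as the Bloch seminorm of $B$ is small. Your intuition that ``$|B'|$ is comparable to the Schwarz--Pick bound near the zero circles'' is exactly what must be \emph{avoided}, and a theorem of Aleksandrov, Anderson and Nicolau \cite{AAN99} supplies infinite Blaschke products with arbitrarily small Bloch seminorm. Finite Blaschke products cannot serve here (their Bloch seminorm is bounded below by \cite{BKN22}); this is why the paper first invokes the infinite-sequence case of Lemma~\ref{lem:lower_bound}, and then passes to finite truncations via a normal-families continuity lemma for the capacity. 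With these two inputs the dual route is not ``of the same order of difficulty''---it is essentially immediate.
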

 
It will be convenient to extend the definition of ${\rm cap}_{\mathcal{B}^s_{p,q}}(\sigma)$
to possibly infinite sequences $\sigma$ in the obvious way. The infimum over the empty
set is understood to be $+ \infty$, so that ${\rm cap}_{\mathcal{B}^s_{p,q}}(\sigma) = + \infty$
in case $\sigma$ is a uniqueness set for $\mathcal{B}_{p,q}^s$.
Our approach to bound ${\rm cap}_{\mathcal{B}_{p,q}^{0}}(\sigma)$
from below is based on a duality method. Namely, the key step of the proof is the following lemma:

\begin{lem}
\label{lem:lower_bound} Given $1\leq p,q\leq\infty$
and a finite
sequence $\sigma$ in $\mathbb{D}_*$,
we have 
\[
  \prod_{\lambda \in \sigma}|\lambda|\cdot{\rm cap}_{\mathcal{B}_{p,q}^{0}}(\sigma)\gtrsim\frac{1-\prod_{\lambda \in \sigma}|\lambda|^2}{\Norm{B_{\sigma}}{\mathcal{B}_{p',q'}^{0}}^{*}},
\]
where $B_{\sigma}$ is the Blaschke product with the zero set $\sigma$
and $p',q'$ are the exponents conjugate to $p,q$.
The same estimate is true for arbitrary Blaschke sequences $\sigma$ in $\mathbb{D}_*$ in case $1 \le p=q \le 2$.
\end{lem}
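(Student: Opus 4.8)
The plan is to argue by duality. We produce an explicit bounded functional on $\mathcal{B}_{p,q}^{0}$ whose restriction to $M:=\{f\in\mathcal{B}_{p,q}^{0}:f|\sigma=0\}$ equals evaluation at $0$ and whose norm is controlled by $\Norm{B}{\mathcal{B}_{p',q'}^{0}}^{*}$, where $B=B_{\sigma}$; since ${\rm cap}_{\mathcal{B}_{p,q}^{0}}(\sigma)=\Norm{L_{0}|_{M}}{M^{*}}^{-1}$ with $L_{0}(f)=f(0)$ (convention $1/0=+\infty$, matching ${\rm cap}=+\infty$ for uniqueness sets), the lemma then follows from Hahn--Banach. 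The functional is built from the bilinear form
\[
\langle u,v\rangle':=\frac{2}{\pi}\int_{\mathbb{D}}u'(z)\,\overline{v'(z)}\,\log\tfrac{1}{|z|}\,\mathrm{d}A(z).
\]
Expanding $u',v'$ in power series and integrating term by term, using $\int_{0}^{1}\rho^{2k-1}\log\frac{1}{\rho}\,\mathrm{d}\rho=\frac{1}{4k^{2}}$, one gets $\langle u,v\rangle'=\sum_{k\ge1}\widehat{u}(k)\overline{\widehat{v}(k)}$; this is legitimate when $u$ is holomorphic in a neighbourhood of $\overline{\mathbb{D}}$ and $v\in\mathcal{H}ol(\mathbb{D})$, the sum converging absolutely. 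On the other hand, Hölder's inequality --- first on $\mathbb{T}$ with exponents $p,p'$, then in $\rho\in(0,1)$ with exponents $q,q'$, the leftover weight $\rho\log(\tfrac1\rho)/(1-\rho)$ being bounded on $(0,1)$ --- gives $|\langle u,v\rangle'|\le C\,\Norm{u}{\mathcal{B}_{p,q}^{0}}^{*}\Norm{v}{\mathcal{B}_{p',q'}^{0}}^{*}$ with $C$ absolute; in particular $\langle\cdot,B\rangle'$ is a functional on $\mathcal{B}_{p,q}^{0}$ of norm $\le C\Norm{B}{\mathcal{B}_{p',q'}^{0}}^{*}$ (finite whenever $B\in\mathcal{B}_{p',q'}^{0}$, automatically so for finite $\sigma$; otherwise the asserted inequality is vacuous).

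The crux is the identity $\langle f,B\rangle'=f(0)\,\frac{1-|B(0)|^{2}}{B(0)}$ for every $f\in M$. Write $f=Bh$ with $h\in\mathcal{H}ol(\mathbb{D})$, so $h(0)=f(0)/B(0)$; with $f_{r}(z)=f(rz)$, $B_{r}(z)=B(rz)$, the function $f_{r}$ is holomorphic past $\overline{\mathbb{D}}$ and $f_{r}=B_{r}h_{r}$, so
\[
\langle f_{r},B\rangle'=\langle f_{r},B\rangle_{L^{2}(\mathbb{T})}-f(0)\overline{B(0)}=\int_{\mathbb{T}}h_{r}\,B_{r}\overline{B}\,\mathrm{d}m-f(0)\overline{B(0)},
\]
and it remains to pass to $r\to1^{-}$. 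For finite $\sigma$, $B$ is a finite Blaschke product, $f_{r}/B$ is meromorphic past $\overline{\mathbb{D}}$, and the residue theorem on $\mathbb{T}$ yields $\langle f_{r},B\rangle_{L^{2}(\mathbb{T})}=\frac{f_{r}(0)}{B(0)}+\sum_{i}\mathrm{Res}_{\lambda_{i}}\frac{f_{r}}{\zeta B}\to\frac{f(0)}{B(0)}$, each residue tending to $0$ because $f$ --- with its derivatives, at multiple points --- vanishes at the $\lambda_{i}$; here $f_{r}\to f$ locally uniformly suffices, and $\langle f,B\rangle'=\lim_{r}\langle f_{r},B\rangle'$ holds by the bound above when $q<\infty$ (and, for $q=\infty$, by dominated convergence on the area integral, via the pointwise estimate $|f'(z)|\lesssim\Norm{f}{\mathcal{B}_{p,\infty}^{0}}^{*}(1-|z|)^{-1-1/p}$), settling all $(p,q)$. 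For an infinite Blaschke sequence $\sigma$ with $1\le p=q\le2$: now $f_{r}\to f$ in $\mathcal{B}_{p,p}^{0}$ (as $p<\infty$), so $\langle f,B\rangle'=\lim_{r}\langle f_{r},B\rangle'$, while $\int_{\mathbb{T}}h_{r}B_{r}\overline{B}\,\mathrm{d}m\to\int_{\mathbb{T}}h\,|B|^{2}\,\mathrm{d}m=h(0)$ by dominated convergence ($B_{r}\overline{B}\to|B|^{2}=1$ a.e.\ with $|B_{r}\overline{B}|\le1$, $h_{r}\to h$ a.e., and $|h_{r}|\le h^{*}\in L^{1}(\mathbb{T})$, $h^{*}$ the nontangential maximal function of $h$). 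The membership $h\in H^{1}$ holds because $f\in\mathcal{B}_{p,p}^{0}\subset H^{p}\subset H^{1}$ for $1\le p\le2$ --- interpolating $\mathcal{B}_{1,1}^{0}\subset H^{1}$ with $\mathcal{B}_{2,2}^{0}=H^{2}$ --- and dividing out the inner factor $B$ (which divides the inner part of $f$, as $f|\sigma=0$) preserves $H^{1}$. In all cases $\langle f,B\rangle'=f(0)\frac{1-|B(0)|^{2}}{B(0)}$.

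Finally, since $|B(0)|=\prod_{\lambda\in\sigma}|\lambda|<1$, the functional $\widetilde{L}(f):=\frac{B(0)}{1-|B(0)|^{2}}\langle f,B\rangle'$ lies in $(\mathcal{B}_{p,q}^{0})^{*}$ with $\Norm{\widetilde{L}}{}\le C\,\frac{|B(0)|}{1-|B(0)|^{2}}\Norm{B}{\mathcal{B}_{p',q'}^{0}}^{*}$, and by the identity $\widetilde{L}|_{M}=L_{0}|_{M}$ (here $M$ is closed, point- and derivative-evaluations being continuous on $\mathcal{B}_{p,q}^{0}$). Hence $\Norm{L_{0}|_{M}}{M^{*}}\le\Norm{\widetilde{L}}{}$, so
\[
{\rm cap}_{\mathcal{B}_{p,q}^{0}}(\sigma)=\Norm{L_{0}|_{M}}{M^{*}}^{-1}\ge\frac{1-|B(0)|^{2}}{C\,|B(0)|\,\Norm{B}{\mathcal{B}_{p',q'}^{0}}^{*}},
\]
and multiplying by $|B(0)|=\prod_{\lambda\in\sigma}|\lambda|$ gives the stated bound. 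The main obstacle is the identification of $\langle f,B\rangle'$ in the infinite-sequence case: the dilation/dominated-convergence step relies on controlling the boundary values of $f$, i.e.\ on $\mathcal{B}_{p,p}^{0}\subset H^{1}$, which is presumably why that case is restricted to $1\le p=q\le2$.
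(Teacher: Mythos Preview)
Your proof is correct and follows essentially the same duality strategy as the paper: both bound the Cauchy pairing $\langle f,B\rangle$ (your $\langle\cdot,\cdot\rangle'$ is precisely $\langle\cdot,\cdot\rangle-f(0)\overline{g(0)}$) by the product of Besov seminorms via H\"older, then evaluate the pairing using that $f/B$ is holomorphic; the paper reaches the H\"older bound through Green's formula plus a fractional-derivative identity giving the weight $(1-|u|^2)$ on the area integral, while you use the Littlewood--Paley representation with weight $\log\tfrac1{|z|}$ directly, and for the infinite case the paper passes to the limit via weak-$*$ convergence of $B_r$ in $H^\infty$ whereas you use the radial maximal function of $h\in H^1$.

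One cosmetic gap: for $(p,q)=(1,\infty)$ your dominated-convergence majorant is $(1-|z|)^{-2}|B'(z)|\log\tfrac1{|z|}$, which behaves like $(1-|z|)^{-1}$ near the boundary (since $B'$ is bounded for a finite Blaschke product) and is not integrable over $\mathbb{D}$. This is easily repaired by splitting at $|z|=\rho_0$: on $|z|\le\rho_0$ use uniform convergence $f_r'\to f'$, and on $\rho_0<|z|<1$ use $(1-\rho)\|(f_r)'_\rho\|_{L^1}\le(1-\rho)(1-r\rho)^{-1}\|f\|^{*}_{\mathcal{B}^0_{1,\infty}}\le\|f\|^{*}_{\mathcal{B}^0_{1,\infty}}$ to bound the tail by $C(1-\rho_0)\|f\|^{*}_{\mathcal{B}^0_{1,\infty}}\|B'\|_\infty$.
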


To prove the lower estimate \eqref{eq:low_bd_p_q} it
remains to apply Lemma \ref{lem:lower_bound} to $\sigma=\sigma^{\star}$
and estimate from above the Besov seminorm of $B^{\star}$. Namely we prove the following.

\begin{prop}
\label{jer} 
If $1\le p\le q \le \infty$, then
\begin{align*}
  \|B^{\star}\|_{\mathcal{B}_{p,q}^{0}}^{*} &\lesssim\frac{1}{(\log N)^{1/p-1/q}}, \qquad p>1, \\
  \|B^{\star}\|_{\mathcal{B}_{1,q}^{0}}^{*} &\lesssim \frac{\log\log N}{(\log N)^{1-1/q}}.
\end{align*}
\end{prop}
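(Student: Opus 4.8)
The plan is to prove Proposition~\ref{jer} by a direct estimate of the Besov seminorm; combined with Lemma~\ref{lem:lower_bound} this yields the lower bounds of Theorem~\ref{thm:low_bd_p_q} as indicated in the text, and the special case $p=1$, $q=\infty$ is Proposition~\ref{prop:1}. Throughout put $a=1-\tfrac1n$, $\mu=1-a=\tfrac1n$, and recall $N\asymp 2^n$, so $\log N\asymp n$ and $\log\log N\asymp\log n$; thus it suffices to show $\|B^\star\|_{\mathcal{B}_{p,q}^0}^{*}\lesssim n^{1/q-1/p}$ for $p>1$ and $\|B^\star\|_{\mathcal{B}_{1,q}^0}^{*}\lesssim(\log n)\,n^{1/q-1}$. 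First I would reduce, pointwise in $\rho$, to a single factor of $B^\star=\prod_{k=1}^n b_k$ with $b_k(z)=\varphi_a(z^{2^k})$, $\varphi_a(w)=\tfrac{w-a}{1-aw}$. Since each $b_j$ maps $\mathbb D$ into $\overline{\mathbb D}$, the product rule gives $|(B^\star)'(\rho\zeta)|\le\sum_{k=1}^n|b_k'(\rho\zeta)|$, hence $\|(B^\star)'_\rho\|_{L^p(\mathbb T)}\le\sum_{k=1}^n\|(b_k')_\rho\|_{L^p(\mathbb T)}$ for all $\rho\in(0,1)$. From $b_k'(z)=2^kz^{2^k-1}\tfrac{1-a^2}{(1-az^{2^k})^2}$, the substitution $w=\zeta^{2^k}$ and the elementary estimate $\int_{\mathbb T}\tfrac{|dw|}{|1-rw|^{2p}}\asymp_p(1-r)^{1-2p}$ (for $p\ge1$, $0<r<1$) give
\[
\|(b_k')_\rho\|_{L^p(\mathbb T)}\asymp_p 2^k\,\mu\,\rho^{2^k}\,(1-a\rho^{2^k})^{1/p-2},\qquad\tfrac12\le\rho<1 .
\]
Letting $\rho\to1$ this yields $\|b_k'\|_{H^p}\asymp 2^k\mu^{1/p-1}=2^kn^{1-1/p}$, and by subharmonicity of $|b_k'|^p$ one gets the uniform bound $\|(B^\star)'_\rho\|_{L^p(\mathbb T)}\le\sum_k\|(b_k')_\rho\|_{L^p(\mathbb T)}\le\sum_k\|b_k'\|_{H^p}\lesssim 2^nn^{1-1/p}$, valid for every $\rho\in(0,1)$.

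The decisive step is a bound on $\|(B^\star)'_\rho\|_{L^p(\mathbb T)}$ that improves on the uniform one away from $\rho=1$. Fix $\rho\in(\tfrac12,1)$ and let $l$ be determined by $2^l\asymp(1-\rho)^{-1}$. Using $1-a\rho^{2^k}\asymp\mu+\min(1,2^k(1-\rho))$ and $\rho^{2^k}\le e^{-2^k(1-\rho)}$, I would split $\sum_{k=1}^n\|(b_k')_\rho\|_{L^p}$ into the three ranges $2^k(1-\rho)\lesssim\mu$, where the general term is $\asymp 2^k\mu^{1/p-1}$; $\mu\lesssim 2^k(1-\rho)\lesssim 1$, where it is $\asymp 2^k\mu\,(2^k(1-\rho))^{1/p-2}$; and $2^k(1-\rho)\gtrsim1$, where it is $\lesssim 2^k\mu\,e^{-2^k(1-\rho)}$. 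The first block sums to $\asymp 2^ln^{-1/p}$, the third to $\asymp\mu\,2^l$, and the second, after the substitution $j=l-k$, to $\asymp\mu\,2^l\sum_{0\le j\lesssim\log_2 n}2^{j(1-1/p)}$, which equals $\asymp\mu\,2^ln^{1-1/p}=2^ln^{-1/p}$ when $p>1$ and $\asymp\mu\,2^l\log n$ when $p=1$. (The sub-cases $l\le\log_2 n$ and $l\ge n$, in which one or two blocks are empty, are handled the same way; for $\rho\le\tfrac12$ one has simply $\sum_k\|(b_k')_\rho\|_{L^p}\lesssim\mu\sum_k 2^k2^{-2^k}\asymp\mu$.) Altogether one obtains, for every $\rho\in(0,1)$,
\[
\|(B^\star)'_\rho\|_{L^p(\mathbb T)}\lesssim\frac{n^{-1/p}}{1-\rho}\ \ (p>1),\qquad\|(B^\star)'_\rho\|_{L^1(\mathbb T)}\lesssim\frac{\log n}{n(1-\rho)} .
\]

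In the final step I would integrate. Combining the last display with the uniform bound above, for $p>1$ one has $\|(B^\star)'_\rho\|_{L^p}\lesssim\min\!\big(2^nn^{1-1/p},\,\tfrac{n^{-1/p}}{1-\rho}\big)$, the two terms being of equal order at $1-\rho\asymp\tfrac1{n2^n}$. Hence, for $q<\infty$,
\[
\|B^\star\|_{\mathcal{B}_{p,q}^0}^{*q}=\int_0^1(1-\rho)^{q-1}\|(B^\star)'_\rho\|^q_{L^p}\,d\rho\lesssim\big(2^nn^{1-1/p}\big)^q(n2^n)^{-q}+n^{-q/p}\log(n2^n)\asymp n^{1-q/p},
\]
since $\log(n2^n)\asymp n$; taking $q$-th roots gives $\|B^\star\|_{\mathcal{B}_{p,q}^0}^{*}\lesssim n^{1/q-1/p}$, and for $q=\infty$ the same $\min$-bound gives $(1-\rho)\|(B^\star)'_\rho\|_{L^p}\lesssim n^{-1/p}$, hence $\|B^\star\|_{\mathcal{B}_{p,\infty}^0}^{*}\lesssim n^{-1/p}$. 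The case $p=1$ is identical, with $\tfrac{\log n}{n(1-\rho)}$ in place of $\tfrac{n^{-1/p}}{1-\rho}$ and the cut-off at $1-\rho\asymp\tfrac{\log n}{n2^n}$; it yields $\|B^\star\|_{\mathcal{B}_{1,q}^0}^{*q}\lesssim\tfrac{(\log n)^q}{n^{q-1}}$, i.e.\ $\|B^\star\|_{\mathcal{B}_{1,q}^0}^{*}\lesssim(\log n)\,n^{1/q-1}$ (and $\lesssim\tfrac{\log n}{n}$ for $q=\infty$).

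I expect the main obstacle to be the bookkeeping in the middle step: one has to confirm the pointwise asymptotics of $\|(b_k')_\rho\|_{L^p}$ in each of the three ranges of $k$, keep track of the three boundary regimes of $\rho$ (near $\tfrac12$, near $1-\tfrac1n$, near $1$) and of the endpoints of the $k$-ranges (near $l-\log_2 n$, near $l$, near $n$), and verify that each contributes only a lower-order term to the final integral; the geometric sum $\sum_j 2^{j(1-1/p)}$ over $0\le j\lesssim\log_2 n$ is precisely where a factor $\log N$ is lost when $p=1$. The hypothesis $p\le q$ is not used in the argument; it merely matches the range of exponents in which Proposition~\ref{jer} is applied through Lemma~\ref{lem:lower_bound}.
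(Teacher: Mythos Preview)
Your argument is correct. The overall strategy—establish the case $q=\infty$ first, then integrate—matches the paper's, and the three-range decomposition of $k$ according to the size of $2^k(1-\rho)$ relative to $\mu$ and $1$ is exactly the splitting used in the proof of Proposition~\ref{prop:1} (there written with $r=1-2^{-s}$). The genuine difference lies in how you pass from $|(B^\star)'|\le\sum_k|b_k'|$ to an $L^p$ estimate when $p>1$: you apply the triangle inequality $\|(B^\star)'_\rho\|_{L^p}\le\sum_k\|(b_k')_\rho\|_{L^p}$ and then evaluate each term via the standard asymptotic $\int_{\mathbb T}|1-rw|^{-2p}\,|dw|\asymp_p(1-r)^{1-2p}$, which yields $\|(B^\star)'_\rho\|_{L^p}\lesssim n^{-1/p}(1-\rho)^{-1}$ in one stroke for every $p>1$. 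The paper instead uses the pointwise inequality $(\sum_k a_k)^p\le(\sum_k a_k^{p/2})^2$ (valid only for $1\le p\le 2$), expands the square into a double sum indexed by $k\le j$, dominates the $j$-factor trivially by $(1-ar^{2^j})^{-p}$, and integrates via Forelli--Rudin; the remaining range $p>2$ is then handled separately by Schwarz--Pick. Your route avoids both the case distinction and the double-sum bookkeeping, and—as you observe—works equally well for $q<p$. The final integration step (from $q=\infty$ to general $q$) is essentially the same in both proofs: use the $q=\infty$ bound on $[0,1-c]$ and a crude uniform bound on $[1-c,1]$, with only the cut-off differing ($c\asymp N^{-2}$ in the paper versus $c\asymp (n2^n)^{-1}$ in yours).
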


The idea of the proof of Theorem \ref{thm:Bpp_lower_bound} is also to use duality.
In case $p=1$, the dual norm turns out to be the Bloch semi-norm.
An obstacle to this strategy is a result of Baranov, Kayumov, and Nasyrov \cite{BKN22},
according to which the Bloch semi-norm of finite Blaschke products is bounded below by a universal constant.
Instead, we will work with infinite Blaschke products, and carry out an approximation argument.
\bigskip


\section{\label{sec:Proof-of-Lemma}Proof of Lemma \ref{lem:lower_bound}}

We first prove Lemma \ref{lem:lower_bound}.
Let $\left\langle \cdot,\,\cdot\right\rangle $ denote the Cauchy sesquilinear form:
given two functions $g \in H^p$ and $h \in H^{p'}$, let
\begin{equation*}
  \langle h,g \rangle = \int_{\mathbb{T}} h(z) \overline{g(z)} \, dm (z),
\end{equation*}
where $m$ denotes the normalized Lebesgue measure on $\mathbb{T}$.
We require the following basic duality result for Besov spaces.

\begin{lem}
  \label{lem:duality}
  Let $1 \le p,q \le \infty$. There exists a constant $C \ge 0$ such that for all functions $f$ and $g$ that are analytic in a neighborhood of $\overline{\mathbb{D}}$, we have
  \begin{equation*}
    |\langle f,g \rangle|  \le |f(0)| |g(0)| + C \|f\|_{\mathcal{B}^0_{p,q}}^* \|g\|_{\mathcal{B}^0_{p',q'}}^*,
  \end{equation*}
  where $p',q'$ are the exponents conjugate to $p,q$.
\end{lem}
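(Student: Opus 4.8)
The plan is to reduce the statement to the well-known duality pairing $(\mathcal{B}^0_{p,q})^* = \mathcal{B}^0_{p',q'}$ for analytic Besov spaces with zero smoothness (see \cite{BL,Pe,Tr}), made quantitative via the Cauchy form. Concretely, I would write the pairing in terms of the radial derivatives and use Green's/Stokes' formula on the disc: for $f,g$ analytic in a neighborhood of $\overline{\mathbb{D}}$ one has the identity
\begin{equation*}
  \langle f, g \rangle = f(0)\overline{g(0)} + c\int_{\mathbb{D}} f'(z)\overline{g'(z)}\, \log\frac{1}{|z|}\, dA(z)
\end{equation*}
for a suitable numerical constant $c$, where $dA$ is area measure. (This is the standard Littlewood--Paley/Green formula identifying the $H^2$ inner product; both sides are well defined since $f,g$ are analytic past the boundary.) Writing the area integral in polar coordinates $z = \rho\zeta$, $\zeta \in \mathbb{T}$, and using $\log(1/\rho) \asymp (1-\rho)$ on, say, $\tfrac12 \le \rho < 1$ (the contribution of $0<\rho<\tfrac12$ being harmless and absorbable), this becomes, up to constants,
\begin{equation*}
  \Big| \langle f,g\rangle - f(0)\overline{g(0)} \Big| \lesssim \int_0^1 (1-\rho) \Big| \int_{\mathbb{T}} f_\rho'(\zeta)\, \overline{g_\rho'(\zeta)}\, dm(\zeta) \Big|\, d\rho.
\end{equation*}

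The second step is purely a pair of Hölder inequalities. First apply Hölder in $\zeta$ with exponents $p, p'$ to bound the inner integral by $\|f_\rho'\|_{L^p(\mathbb{T})} \|g_\rho'\|_{L^{p'}(\mathbb{T})}$. Then split the weight as $(1-\rho) = (1-\rho)^{1-1/q} \cdot (1-\rho)^{1-1/q'}$ (using $1/q + 1/q' = 1$) and apply Hölder in $\rho$ with exponents $q, q'$ to obtain
\begin{equation*}
  \int_0^1 (1-\rho)\, \|f_\rho'\|_{L^p} \|g_\rho'\|_{L^{p'}}\, d\rho \le \Big( \int_0^1 (1-\rho)^{q-1} \|f_\rho'\|_{L^p}^q\, d\rho\Big)^{1/q} \Big( \int_0^1 (1-\rho)^{q'-1}\|g_\rho'\|_{L^{p'}}^{q'}\, d\rho\Big)^{1/q'},
\end{equation*}
which is exactly $\|f\|^*_{\mathcal{B}^0_{p,q}} \|g\|^*_{\mathcal{B}^0_{p',q'}}$ by the definition of these seminorms in Section \ref{sec:Known-results-and}. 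One should check the endpoint cases $q = 1$ (so $q' = \infty$) and $q = \infty$ separately, but there Hölder in $\rho$ degenerates to the trivial bound $\sup \times L^1$, which is precisely the $\sup$-form of the $\mathcal{B}^0_{p,\infty}$ seminorm recorded in the text; similarly for $p \in \{1,\infty\}$. The constant $C$ is the absolute constant $c$ from the Green identity, possibly adjusted by the $\rho \in (0,\tfrac12)$ estimate, and in particular does not depend on $f,g$.

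The only genuinely delicate point is justifying the Green-type identity and ensuring that no convergence issues arise; this is why the hypothesis "analytic in a neighborhood of $\overline{\mathbb{D}}$" is imposed — it makes $f', g'$ bounded on $\overline{\mathbb{D}}$, so every integral above converges absolutely and Fubini applies without comment, and the boundary values in $\langle f,g\rangle$ are the honest restrictions to $\mathbb{T}$. Alternatively, one can avoid Green's formula entirely by expanding $f = \sum \hat f(k) z^k$, $g = \sum \hat g(k) z^k$, noting $\langle f,g\rangle = \sum_k \hat f(k)\overline{\hat g(k)}$, and matching this against the Taylor-coefficient description of the Besov seminorms; but the Green/integral route is cleaner for directly producing the stated product of seminorms. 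I expect the write-up to be short, with the bookkeeping of the endpoint exponents $q \in \{1,\infty\}$ being the most tedious (but routine) part.
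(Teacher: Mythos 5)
Your proposal is correct, and its overall strategy coincides with the paper's: convert the boundary pairing into an area integral involving derivatives via a Green-type identity, then apply H\"older once in the angular variable (exponents $p,p'$) and once in the radial variable (exponents $q,q'$, after splitting the weight $(1-\rho)=(1-\rho)^{1-1/q}(1-\rho)^{1-1/q'}$), with the endpoint cases degenerating to the $\sup$-form of the seminorm exactly as you say. The difference is in the identity used. The paper writes $\langle f,g\rangle=(f',S^*g)_{A^2}+f(0)\overline{g(0)}$ and then invokes the fractional-differentiation formula of Hedenmalm--Korenblum--Zhu with $\alpha=1$ to introduce the weight $(1-|u|^2)$; this produces the term $D_1(S^*g)=\tfrac12(S^*g+g')$, which must then be controlled by the additional estimate $\|(D_1(S^*g))_\rho\|_{L^{p'}}\lesssim\|g'_\rho\|_{L^{p'}}$, obtained from $(S^*g)(z)=\tfrac1z\int_0^1 t\,g'(tz)\,dt$. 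Your route uses instead the classical Littlewood--Paley/Green identity
\begin{equation*}
\langle f,g\rangle=f(0)\overline{g(0)}+2\int_{\mathbb{D}}f'(z)\overline{g'(z)}\,\log\frac1{|z|}\,{\rm d}\mathcal{A}(z),
\end{equation*}
which pairs $f'$ with $g'$ directly and so bypasses the backward shift and the $D_1$ estimate entirely; this is slightly more elementary and yields the same bound. One small improvement to your write-up: the splitting at $\rho=\tfrac12$ and the ``harmless and absorbable'' remark are unnecessary, since $\rho\log(1/\rho)\le 1-\rho$ holds for all $\rho\in(0,1)$ (the function $1-\rho+\rho\log\rho$ decreases to $0$ at $\rho=1$), so after passing to polar coordinates the full integral is at once dominated by $2\int_0^1(1-\rho)\,\|f'_\rho\|_{L^p}\|g'_\rho\|_{L^{p'}}\,{\rm d}\rho$, and the two H\"older applications finish the proof with an absolute constant.
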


\begin{proof}
  
Denote by $(h,\,g)$ the scalar product on the Bergman space $A^{2}$
defined by 
\[
(h,\,g)=\int_{\mathbb{D}}h(u)\overline{g(u)}{\rm d}\mathcal{A}(u),\qquad h,\,g\in A^{2},
\]
where ${\rm d}\mathcal{A}(u) = \frac{{\rm d}x\,{\rm d}y}{\pi}$ is the normalized planar Lebesgue measure on $\mathbb{D}$.
We recall the simplest form of Green's formula, 
\begin{equation}
\left\langle \phi,\,\psi\right\rangle =(\phi',\,S^{*}\psi)+\phi(0)\overline{\psi(0)},
\label{gr}
\end{equation}
where $S^{*}$ is the backward shift operator $S^{*}f=(f-f(0))/z$ and $\varphi,\psi$ are functions
that are analytic in a neighborhood of $\overline{\mathbb{D}}$.
We will also need to use the following integral formula. Recall that
the fractional differentiation operator $D_{\alpha}$, $-1<\alpha<\infty$,
is defined by $D_{\alpha}(z^{j})=\frac{\Gamma(j+2+\alpha)}{(j+1)!\Gamma(2+\alpha)}z^{j}$,
$j=0,\,1,\,2,\,\dots$, and extends linearly and continuously to the whole space $\mathcal{H}ol(\mathbb{D})$. 
Then, for functions $f,g$ analytic
in a neighborhood of $\overline{\mathbb{D}}$ and $-1<\alpha<\infty$, we have 
\begin{equation}
\int_{\mathbb{D}}f(u)\overline{g(u)}{\rm d}\mathcal{A}(u)=(\alpha+1)\int_{\mathbb{D}}D_{\alpha}f(u)
\overline{g(u)}\left(1-\left|u\right|^{2}\right)^{\alpha}{\rm d}\mathcal{A}(u),
\label{gr2}
\end{equation}
see \cite[Lemma 1.20]{HKZ}. 

Let $f,g$ be analytic in a neighborhood of $\overline{\mathbb{D}}$.
Applying \eqref{gr} we get 
\[
  \left\langle f,\,g\right\rangle =(f',\,S^{*}g)+ f(0) \overline{g(0)}.
\]
Then we apply \eqref{gr2} to $\overline{(f',\,S^{*}g)}=(S^{*}g,\,f')$ with $\alpha=1$:
\begin{align*}
(S^{*}g,\,f') & =2\int_{\mathbb{D}}D_{1}(S^{*}g)(u)\overline{f'(u)}\left(1-\left|u\right|^{2}\right){\rm d}\mathcal{A}(u)\\
 & =2\int_{0}^{1}\rho\left(1-\rho^{2}\right)\left(\int_{\mathbb{T}}D_{1}(S^{*}g)(\rho z)\overline{f'(\rho z)}{\rm d}m(z)\right){\rm d}\rho.
\end{align*}
By Hölder's inequality  
\[
\Abs{\int_{\mathbb{T}}D_{1}(S^{*}g)(\rho z)\overline{f'(\rho z)}{\rm d}m(z)}\leq\|f_{\rho}'\|_{L^{p}}\|(D_{1}(S^{*}g))_{\rho}\|_{L^{p'}}.
\]
Since $D_1(S^* g) = \frac{1}{2}( S^* g + g')$ and $(S^* g)(z) = \frac{1}{z} \int_{0}^1 t g'(t z) \, dt$, it follows that
$\|D_1(S^* g)\|_{L^{p'}} \lesssim \|g'\|_{L^{p'}}$.
The preceding estimates therefore give
\[
\abs{(S^{*}g,\,f')}\lesssim\int_{0}^{1}\left(1-\rho\right)\|f_{\rho}'\|_{L^{p}}\|g_{\rho}'\|_{L^{p'}}{\rm d}\rho.
\]
Then (again by Hölder's inequality) we get
\[
\abs{(S^{*}g,\,f')}\lesssim\Norm{f}{\mathcal{B}_{p,q}^{0}}^{*}\Norm{g}{\mathcal{B}_{p',q'}^{0}}^{*},
\]
as desired.
\end{proof}

\begin{proof}[Proof of Lemma \ref{lem:lower_bound}]
  Suppose first that $\sigma$ is a finite sequence in $\mathbb{D}_*$, say $|\sigma| = N$.
  Let $f$ be a function that is analytic in a neighborhood of $\overline{\mathbb{D}}$ such that
  $f(0) = 1$ and $f \big|{\sigma} = 0$. Then we have (writing $B = B_\sigma$)
  \begin{equation*}
    \langle f,B \rangle  = \frac{f(0)}{B(0)} = \frac{1}{\prod_{\lambda \in \sigma} \lambda}.
  \end{equation*}
  On the other hand, Lemma \ref{lem:duality} shows that
  \begin{equation*}
    | \langle f, B \rangle | \le |f(0)|  |B(0)| + C \|f\|^*_{\mathcal{B}^0_{p,q}} \|B\|^*_{\mathcal{B}^0_{p',q'}}
    = \prod_{\lambda \in \sigma} |\lambda| + C \|f\|_{\mathcal{B}^0_{p,q}}^* \|B\|_{\mathcal{B}^0_{p',q'}}^*.
  \end{equation*}
  Thus,
  \begin{equation}
    \label{eqn:lower_bound_cap}
    \prod_{\lambda \in \sigma} |\lambda| \cdot \|f\|_{\mathcal{B}^0_{p,q}}^*
    \ge \frac{1 - \prod_{\lambda \in \sigma} |\lambda|^2}{C\|B\|_{\mathcal{B}^0_{p',q'}}^*}.
  \end{equation}
  Now, let $f \in \mathcal{B}^0_{p,q}$ be an arbitrary function such that $f(0) = 1$
  and $f \big|{\sigma} = 0$. Let $0 < r < 1$ be such that $\frac{1}{r} \sigma \subset \mathbb{D}$.
  Then $f_r$ vanishes on $\frac{1}{r} \sigma$, hence by what has already been proved,
  \begin{equation*}
    r^N \prod_{\lambda \in \sigma} |\lambda| \cdot \|f_r\|_{\mathcal{B}^0_{p,q}}^*
    \ge \frac{1 - r^{2N} \prod_{\lambda \in \sigma} |\lambda|^2}{C\|B_{\frac{1}{r} \sigma}\|_{\mathcal{B}^0_{p',q'}}^*}.
  \end{equation*}
  Recall that $\|f_r\|_{\mathcal{B}^0_{p,q}}^* \le \|f\|_{\mathcal{B}^0_{p,q}}$.
  Moreover, $B_{\frac{1}{r} \sigma}$ converges to $B_{\sigma}$ uniformly in a neighborhood of $\overline{\mathbb{D}}$ as $r \to 1$.
  So taking the limit $r \to 1$, we conclude that \eqref{eqn:lower_bound_cap} holds for arbitrary
  $f \in \mathcal{B}^0_{p,q}$ satisfying $f(0) = 1$ and $f \big|\sigma = 0$. Taking the infimum over all admissible functions $f$,
  we obtain the lemma for finite sequences.

  Let now $1 \le p=q \le 2$ and let $\sigma$ be a possibly infinite Blaschke sequence.
  Let $B = B_\sigma$ and let $f \in \mathcal{B}^0_{p,p}$ be a function vanishing on $\sigma$ with $f(0) = 1$.
  We apply Lemma \ref{lem:duality} to the functions $f_r$ and $B_r$ to obtain the bound
  \begin{equation*}
    |\langle f_r,B_r \rangle| \le |B(0)| + C \|f\|_{\mathcal{B}^0_{p,p}}^* \|B\|^*_{\mathcal{B}^0_{q,q}}
  \end{equation*}
  for all $r < 1$.

  The classical Littlewood--Paley inequality shows that $\mathcal{B}^0_{p,p} \subset H^p \subset H^1$
  (see \cite[Theorem 6]{LP36} and also \cite[Lemma 1.4]{Vinogradov95}), so $f_r \to f$ in the norm of $H^1$. Moreover, $B \in H^\infty$
  and  $B_r \to B$ weak-$*$ in $H^\infty$. From this, it follows that
  \begin{equation*}
    \langle f,B \rangle - \langle f_r,B_r \rangle = \langle f, B - B_r \rangle + \langle f - f_r, B_r \rangle  \to 0
  \end{equation*}
  as $r \to 1$. Thus,
  \begin{equation*}
    |\langle f,B  \rangle| \le |B(0)| + C \|f\|_{\mathcal{B}^0_{p,p}}^* \|B\|_{\mathcal{B}^0_{p',p'}}^*.
  \end{equation*}
  Using that $f \in H^1$ vanishes on $\sigma$, we may factor $f = B g$ for some $g \in H^1$. Then
  \begin{equation*}
    \langle f,B \rangle = \langle g,1 \rangle = g(0) = \frac{1}{B(0)}.
  \end{equation*}
  Combining the last two formulas and taking the infimum over all admissible $f \in \mathcal{B}^0_{p,p}$ again yields the desired inequality.
\end{proof}
\bigskip

\section{\label{sec:Proof-of-Theorem1}Proof of Theorem \ref{thm:low_bd_infty_1}}

\subsection{Proof of Proposition \ref{prop:1}}
For simplicity we write $B$ instead of $B^{\star}$ throughout
the proof. Then $N={\rm deg}\,B \asymp2^{n}$.
For the zeros $z_{1},\dots z_{N}$ of $B$ we have 
\[
\prod_{j=1}^{N}|z_{j}|=a^{n}<e^{-1}.
\]
For $z\in\mathbb{D}$, $|z|=r$, we have
\begin{equation}
|B'(z)|\le\sum_{k=1}^{n}2^{k}r^{2^{k}-1}\frac{1-a^{2}}{|1-az^{2^{k}}|^{2}}.\label{Basic_inequality}
\end{equation}
Using that $\|(1- b z^N)^{-1}\|_{H^2}^2 = (1 - b^2)^{-1}$ for $b \in [0,1)$, we find that
\[
(1-r)\int_{0}^{2\pi}|B'(re^{it})|dt\le2\pi\sum_{k=1}^{n}2^{k}r^{2^{k}-1}\frac{(1-r)(1-a^{2})}{1-a^{2}r^{2^{k+1}}}\lesssim\frac{1}{n}\sum_{k=1}^{n}2^{k}r^{2^{k}-1}\frac{1-r}{1-ar^{2^{k}}}.
\]
Let us first estimate this quantity for $0\le r\le \frac{1}{2}$. In this case  
\[
\frac{1}{n}\sum_{k=1}^{n}2^{k}r^{2^{k}-1}\frac{1-r}{1-ar^{2^{k}}}\lesssim\frac{1}{n}\sum_{k=1}^{n}
2^{k-2^k} \lesssim\frac{1}{n}.
\]

From now one we assume that $r=1-\frac{1}{2^{s}}$, where $s\ge1$, and we write 
\[
\frac{1}{n}\sum_{k=1}^{n}2^{k}r^{2^{k}-1}\frac{1-r}{1-ar^{2^{k}}}=\frac{1}{n}\sum_{k=1}^{[s]}2^{k}r^{2^{k}-1}\frac{1-r}{1-ar^{2^{k}}}+\frac{1}{n}\sum_{k=[s]+1}^{n}2^{k}r^{2^{k}-1}\frac{1-r}{1-ar^{2^{k}}}=S_{1}+S_{2}.
\]
Since $(1-x)^{t}<e^{-tx}$, $0<x<1$, $t>0$, we have
\[ 
r^{2^{k}}=\bigg(1-\frac{1}{2^{s}}\bigg)^{2^{k}}<e^{-2^{k-s}}.
\]
Therefore, for $k\ge[s]+1$ we have $r^{2^{k}}<e^{-1}$ and so 
\[
S_{2}\lesssim\frac{1}{n}\sum_{k=[s]+1}^{n}2^{k-s}e^{-2^{k-s}}\lesssim\frac{1}{n}.
\]
For $k\le[s]$ we use the inequality 
\[
r^{-2^{k}}-a>e^{2^{k-s}}-1+\frac{1}{n}>2^{k-s}+\frac{1}{n}.
\]
Thus, 
\[
S_{1}\lesssim\frac{1}{n}\sum_{k=1}^{[s]}2^{k}\frac{1-r}{r^{-2^{k}}-a}<\frac{1}{n}\sum_{k=1}^{[s]}\frac{2^{k-s}}{2^{k-s}+\frac{1}{n}}.
\]
We split this sum into two more sums, over $k$ such that $2^{k-s}<\frac{1}{n}$ and
$2^{k-s}\ge\frac{1}{n}$. Then we have 
\[
S_{1}\lesssim\frac{1}{n}\sum_{1\le k<s-\frac{\log n}{\log2}}2^{k-s}\cdot n+\frac{1}{n}\sum_{s-\frac{\log n}{\log2} \le k \le[s]}2^{k-s}\cdot2^{s-k}\lesssim2^{-\frac{\log n}{\log 2}}+\frac{\log n}{n\log2}\lesssim\frac{1}{n}+\frac{\log n}{n\log2}.
\]

Thus, we have shown that 
\[
\|B\|_{B_{1,\infty}^{0}}^{*}\lesssim\frac{1}{n}+\frac{\log n}{n\log2} \lesssim\frac{\log\log N}{\log N}.
\]
\qed

\subsection{Proof of Theorem \ref{thm:low_bd_infty_1}}
 Applying Lemma \ref{lem:lower_bound} to $\sigma=\sigma^{\star}$
with $(p,q)=(\infty,1)$ we obtain 
\[
  \prod_{i=1}^N |\lambda_i| \cdot {\rm cap}_{\mathcal{B}_{\infty,1}^{0}}(\sigma^{\star})\gtrsim\frac{1}{\Norm{B}{\mathcal{B}_{1,\infty}^{0}}^{*}}
\]
because $\prod_{i=1}^{N}\abs{\lambda_{i}}=a^{n} < e^{-1}.$ It remains
to apply Proposition \ref{prop:1}. 
\qed
\bigskip


\section{\label{sec:Proof-of-th_low_bds}Proofs of Theorem \ref{thm:low_bd_p_q} and Theorem \ref{thm:Bpp_lower_bound}}

\subsection{Proof of Proposition \ref{jer}}
 As in the proof of Proposition \ref{prop:1}, for simplicity we
write $B$ instead of $B^{\star}$ throughout the proof.
\medskip
\\
\textit{\uline{Step 1: the case \mbox{$q=\infty$}.}} 
Note that the case $p=1$ is already covered by Proposition \ref{prop:1}.
We start with the case $1<p\le2$. We have to prove that
\begin{equation}
\sup_{0\le r<1}(1-r)\left(\int_{0}^{2\pi}|B\rq{}(re^{it})|^{p}dt\right)^{1/p}\lesssim\frac{1}{(\log N)^{1/p}}\label{inequality1}.
\end{equation}
It follows from (\ref{Basic_inequality}) that 
\[
I:=(1-r)^{p}\int_{0}^{2\pi}|B'(re^{it})|^{p}dt\lesssim\int_{0}^{2\pi}\left(\frac{1}{n}\sum_{k=1}^{n}2^{k}r^{2^{k}-1}\frac{(1-r)}{|1-ar^{2^{k}e^{i2^{k}t}}|^{2}}\right)^{p}dt.
\]
Since for $0<p/2\leq1$ and any $a_{k}\ge0$ one has 
\[
\Big(\sum_{k}a_{k}\Big)^{p}\leq\Big(\sum_{k}a_{k}^{p/2}\Big)^{2},
\]
we conclude that 
\begin{align*}
  I&\leq\frac{1}{n^{p}}\int_{0}^{2\pi}\left(\sum_{k=1}^{n} \frac{(2^{k}r^{2^{k}-1}(1-r))^{p/2}}{|1-ar^{2^{k}}e^{i2^{k}t}|^{p}}\right)^{2}dt \\
   &\lesssim\frac{1}{n^{p}}\int_{0}^{2\pi}\sum_{k\leq j\leq n}\frac{(2^{k}r^{2^{k}-1}(1-r))^{p/2}(2^{j}r^{2^{j}-1}(1-r))^{p/2}}{|1-ar^{2^{k}}e^{i2^{k}t}|^{p}|1-ar^{2^{j}}e^{i2^{j}t}|^{p}}dt \\
   &\lesssim\frac{1}{n^{p}}\int_{0}^{2\pi}\sum_{k\leq j\leq n}\frac{(2^{k}r^{2^{k}-1}(1-r))^{p/2}(2^{j}r^{2^{j}-1}(1-r))^{p/2}}{|1-ar^{2^{k}}e^{i2^{k}t}|^{p}(1-ar^{2^{j}})^{p}}dt.
\end{align*}
After integration with respect to $t$ and using a well-known estimate of Forelli and Rudin (see \cite[Theorem 1.7]{HKZ}) we get
\begin{align*}
  I &\lesssim\frac{1}{n^{p}}\sum_{k\leq j\leq n}\frac{(2^{k}r^{2^{k}-1}(1-r))^{p/2}(2^{j}r^{2^{j}-1}(1-r))^{p/2}}{(1-ar^{2^{k}})^{p-1}(1-ar^{2^{j}})^{p}} \\
    &\lesssim\frac{1}{n^{p}}\sum_{k\leq j\leq n}\frac{(2^{k}r^{2^{k}-1}(1-r))^{p/2}(2^{j}r^{2^{j}-1}(1-r))^{p/2}}{(1-ar^{2^{k}})^{p-1/2}(1-ar^{2^{j}})^{p-1/2}} \\
    &\lesssim\frac{1}{n^{p}}\left(\sum_{k=1}^{n}\frac{(2^{k}r^{2^{k}-1}(1-r))^{p/2}}{(1-ar^{2^{k}})^{p-1/2}}\right)^{2}.
\end{align*}
Thus, we need to show that 
\[
S=\frac{1}{n^{p/2}}\sum_{k=1}^{n}\frac{(2^{k}r^{2^{k}-1}(1-r))^{p/2}}{(1-ar^{2^{k}})^{p-1/2}}\leq\frac{1}{\sqrt{n}}.
\]

If $r\leq 1/2$, then, clearly, $S\lesssim n^{-p/2} \le n^{-1/2}$. 
Now, let $r=1-1/2^{s}$ where $s\ge1$. If $k\ge [s]+1$, then $r^{2^{k}}<e^{-2^{k-s}}\leq1/e$
and 
\[
\frac{1}{n^{p/2}}\sum_{k=[s]+1}^{n}\frac{(2^{k}r^{2^{k}-1}(1-r))^{p/2}}{(1-ar^{2^{k}})^{p-1/2}}\lesssim\frac{1}{n^{p/2}}\sum_{k=[s]+1}^{n}(2^{k-s}e^{-2^{k-s}})^{p/2}\lesssim\frac{1}{n^{p/2}}.
\]
Note that $r^{2^{k}}=(1-1/2^{s})^{2^{k}}\ge(1-1/2)^{2}$ for $k \le [s]$
and, therefore, as in the proof of Proposition \ref{prop:1},
\[
|1-ar^{2^{k}}|\gtrsim r^{-2^{k}}-a= r^{-2^{k}} -1 +1/n\ge 2^{k-s}+1/n.
\]
As in the proof of Proposition \ref{prop:1} we split the sum into
two parts. For $1\leq k<s-\frac{\log n}{\log2}$ we have $2^{k-s}<1/n$
and, therefore,
\begin{align*}
\frac{1}{n^{p/2}}\sum_{k<s-\log n/\log2}\frac{(2^{k}r^{2^{k}-1}(1-r))^{p/2}}{(1-ar^{2^{k}})^{p-1/2}}
&\lesssim \frac{1}{n^{p/2}}n^{p-1/2}\sum_{k<s-\log n/\log2}2^{(k-s)p/2} \\
&\lesssim n^{p/2-1/2}2^{(- \log n/\log2)p/2} = n^{-1/2}.
\end{align*}
Finally, for $s-\frac{\log n}{\log2}\leq k\leq[s]$ we have $2^{k-s}\ge1/n$
and so 
\begin{align*}
\frac{1}{n^{p/2}}\sum_{s-\log n/\log2\leq k\leq[s]}\frac{(2^{k}r^{2^{k}-1}(1-r))^{p/2}}{(1-ar^{2^{k}})^{p-1/2}}
&\lesssim\frac{1}{n^{p/2}}\sum_{s-\log n/\log2\leq k\leq[s]}2^{(k-s)p/2}2^{(s-k)(p-1/2)} \\
&\lesssim n^{-p/2}2^{(\log n/\log2)(p-1)/2}=n^{-1/2}.
\end{align*}
 Thus, we have shown that $S\le n^{-1/2}$ for $1<p\le2$, and so 
\[
  I=(1-r)^{p}\int_{0}^{2\pi}|B'(re^{it})|^{p}dt \lesssim S^{2} \lesssim \frac{1}{\log N}.
\]
The estimate remains true for $p>2$ since by the Schwarz--Pick inequality,
we have $|B'(r e^{ it})|^p \le (1-r^2)^{2-p} |B'(r e^{it})|^2$.
\medskip
\\
\textit{\uline{Step 2: the case \mbox{$1\le p\le q<\infty$}.}}
We have to show that
\[
\int_{0}^{1}(1-r)^{q-1}\left(\int_{0}^{2\pi}|B'(re^{it})|^{p}dt\right)^{q/p}dr\lesssim\frac{1}{(\log N)^{q/p-1}}
\]
(respectively $\lesssim \frac{(\log \log N)^q}{(\log N)^{q-1}}$ in case $p=1$).
It follows from \eqref{inequality1} that for $1<p<\infty$
\[
\int_{0}^{1-1/N^2}(1-r)^{q-1}\left(\int_{0}^{2\pi}|B'(re^{it})|^{p}dt\right)^{q/p}dr\lesssim\frac{1}{(\log N)^{q/p}}\int_{0}^{1-1/N^2}\frac{dr}{1-r}\lesssim\frac{1}{(\log N)^{q/p-1}},
\] 
while for $p=1$ we have by Proposition \ref{prop:1} that
\[
\int_{0}^{1-1/N^2}(1-r)^{q-1}\left(\int_{0}^{2\pi}|B'(re^{it})|dt\right)^{q}dr\lesssim
\frac{(\log\log N)^q}{(\log N)^{q-1}}.
\] 
On the other hand, note that, since $\int_0^{2\pi} |B'(r e^{it})| dt \le \int_0^{2\pi} |B'(e^{it})| dt =2\pi N$, we have
by the Schwarz--Pick inequality
\[
\int_{0}^{2\pi}|B'(re^{it})|^{p}dt\leq\frac{1}{(1-r^{2})^{p-1}}\int_{0}^{2\pi}|B'(re^{it})|dt \le \frac{2\pi N}{(1-r^{2})^{p-1}}.
\]
Therefore, 
\[
\begin{aligned}
\int_{1-1/N^2}^{1}(1-r)^{q-1}  & \left(\int_{0}^{2\pi}|B'(re^{it})|^{p}dt\right)^{q/p}dr 
\lesssim N^{q/p}\int_{1-1/N^2}^{1}(1-r)^{q-1-q(p-1)/p}dr \\
& =N^{q/p}\int_{1-1/N^2}^{1}(1-r)^{q/p-1}\lesssim N^{-q/p} = o\Big(\frac{1}{(\log N)^{q/p-1}} \Big).
\end{aligned}
\]
Combining the above estimates we come to the conclusion of the proposition. 
\qed

\subsection{Proof of Theorem \ref{thm:low_bd_p_q}}
 As in the proof of Theorem \ref{thm:low_bd_infty_1}, we apply Lemma
\ref{lem:lower_bound} to $\sigma=\sigma^{\star}$.  This gives 
\[
  \prod_{i=1}^N |\lambda_i| \cdot 
  {\rm cap}_{\mathcal{B}_{p,q}^{0}}(\sigma^{\star})\gtrsim\frac{1}{\Norm{B}{\mathcal{B}_{p',q'}^{0}}^{*}}
\]
again because $\prod_{i=1}^{N}\abs{\lambda_{i}}=a^{n} < e^{-1}.$ It
remains to apply Proposition \ref{jer} (with $p',q'$ in place of $p,q$) to prove the lower bounds \eqref{eq:low_bd_p_q} 
and \eqref{tte1}. The upper estimate in \eqref{tte2} follows from Theorem
\ref{thm:Upper_bds}. 
\qed


\subsection{Proof of Theorem \ref{thm:Bpp_lower_bound}}

To pass from infinite to finite Blaschke sequences, we require the following continuity property of capacity.

\begin{lem}
  \label{lem:continuity}
  Let $1 \le p,q \le \infty$.
  Let $\sigma \subset \mathbb{D} \setminus \{0\}$ be an infinite sequence. For $N \in \mathbb{N}$, let
  $\sigma_N$ consist of the first $N$ points of $\sigma$. Then
  \begin{equation*}
    \ccap_{\mathcal{B}^0_{p,q}}(\sigma) = \lim_{N \to \infty} \ccap_{\mathcal{B}^0_{p,q}} (\sigma_N).
  \end{equation*}
\end{lem}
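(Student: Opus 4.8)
The plan is to prove the two inequalities $\lim_{N}\ccap_{\mathcal{B}^0_{p,q}}(\sigma_N)\le\ccap_{\mathcal{B}^0_{p,q}}(\sigma)$ and $\ccap_{\mathcal{B}^0_{p,q}}(\sigma)\le\lim_{N}\ccap_{\mathcal{B}^0_{p,q}}(\sigma_N)$ separately. For the first one I would simply observe that, since $\sigma_N\subset\sigma_{N+1}$, every function admissible in the infimum defining $\ccap_{\mathcal{B}^0_{p,q}}(\sigma_{N+1})$ is also admissible for $\ccap_{\mathcal{B}^0_{p,q}}(\sigma_N)$; hence the sequence $\big(\ccap_{\mathcal{B}^0_{p,q}}(\sigma_N)\big)_N$ is non-decreasing and its limit $L\in[0,+\infty]$ exists, and similarly any $f$ admissible for $\ccap_{\mathcal{B}^0_{p,q}}(\sigma)$ is admissible for every $\ccap_{\mathcal{B}^0_{p,q}}(\sigma_N)$, which gives $L\le\ccap_{\mathcal{B}^0_{p,q}}(\sigma)$. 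In particular, the statement is already proved when $L=\infty$, so from now on I would assume $L<\infty$, and note that then $\ccap_{\mathcal{B}^0_{p,q}}(\sigma_N)\le L<\infty$ for all $N$.

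For the reverse inequality I would, for each $N$, pick a near-optimal function $f_N\in\mathcal{B}^0_{p,q}$ with $f_N(0)=1$, $f_N\vert\sigma_N=0$ and $\|f_N\|_{\mathcal{B}^0_{p,q}}\le\ccap_{\mathcal{B}^0_{p,q}}(\sigma_N)+1/N\le L+1$. Since $\mathcal{B}^0_{p,q}$ embeds continuously into $\mathcal{H}ol(\mathbb{D})$, the family $(f_N)$ is uniformly bounded on compact subsets of $\mathbb{D}$, so by Montel's theorem I can extract a subsequence $(f_{N_j})$ converging locally uniformly on $\mathbb{D}$ to some $f\in\mathcal{H}ol(\mathbb{D})$. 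Passing to the limit in the interpolation conditions — and using that locally uniform convergence of holomorphic functions also forces convergence of all derivatives, which handles possible repetitions in the sequence $\sigma$ — I would obtain $f(0)=1$ and $f\vert\sigma=0$.

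The remaining point, which I expect to be the only mildly delicate one, is to show that $f\in\mathcal{B}^0_{p,q}$ with $\|f\|_{\mathcal{B}^0_{p,q}}\le L$; this is a Fatou-type lower semicontinuity of the Besov seminorm along $f_{N_j}\to f$. For each fixed $\rho\in(0,1)$ the uniform convergence $f'_{N_j,\rho}\to f'_\rho$ on $\mathbb{T}$ gives $\|f'_{N_j,\rho}\|_{L^p(\mathbb{T})}\to\|f'_\rho\|_{L^p(\mathbb{T})}$, and then I would apply Fatou's lemma to the outer integral in $\rho$ when $q<\infty$ (respectively take the supremum over $\rho$ when $q=\infty$) to obtain $\|f\|^*_{\mathcal{B}^0_{p,q}}\le\liminf_j\|f_{N_j}\|^*_{\mathcal{B}^0_{p,q}}$; combined with $f(0)=1=f_{N_j}(0)$ this gives $\|f\|_{\mathcal{B}^0_{p,q}}\le\liminf_j\|f_{N_j}\|_{\mathcal{B}^0_{p,q}}\le L$. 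Then $f$ is admissible for $\ccap_{\mathcal{B}^0_{p,q}}(\sigma)$ and hence $\ccap_{\mathcal{B}^0_{p,q}}(\sigma)\le L$, which finishes the argument.

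I would emphasize that the whole issue of making this uniform across the endpoint exponents $(p,q)\in\{1,\infty\}^2$ is circumvented by extracting a locally uniformly convergent subsequence via Montel's theorem — using nothing but the continuous embedding $\mathcal{B}^0_{p,q}\hookrightarrow\mathcal{H}ol(\mathbb{D})$ — rather than trying to use weak or weak-$*$ compactness in $\mathcal{B}^0_{p,q}$, and by closing the argument with lower semicontinuity of the seminorm rather than with norm convergence $f_{N_j}\to f$, which one cannot expect at the endpoints.
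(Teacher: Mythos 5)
Your proposal is correct and follows essentially the same route as the paper: the paper also extracts a locally uniformly convergent subsequence of near-optimal functions via normality and then uses Fatou's lemma to get $f \in \mathcal{B}^0_{p,q}$ with the right norm bound, admissibility for $\sigma$ following from convergence of derivatives. The only difference is bookkeeping (the paper fixes $c > \liminf_N \ccap(\sigma_N)$ instead of using $1/N$-optimal competitors), and your explicit justification of the lower semicontinuity of the seminorm, including the $q=\infty$ case, is exactly what the paper's appeal to Fatou compresses.
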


\begin{proof}
  For simplicity, we abbreviate $\ccap = \ccap_{\mathcal{B}^0_{p,q}}$.
  The inequality $\ccap(\sigma_N) \le \ccap(\sigma)$ is trivial,
  so it suffices to show that $\ccap(\sigma) \le \liminf_{N \to \infty} \ccap(\sigma_N)$.
  Clearly, we may assume that the limit inferior is finite.

  If $c > \liminf_{N \to \infty} \ccap(\sigma_N)$, then by definition of capacity, there exist a sequence $(N_k)$ tending to infinity
  and functions $f_{N_k} \in \mathcal{B}^0_{p,q}$ such that $f_{N_k}$ vanishes on $\sigma_{N_k}$, $f_{N_k} (0) = 1$
  and $\|f\|_{\mathcal{B}^0_{p,q}} \le c$ for all $k$.
  Then $(f_{N_k})_k$ is a normal family, so a subsequence converges locally uniformly on $\mathbb{D}$ to a holomorphic function $f$.
  By Fatou's lemma, $f \in \mathcal{B}^0_{p,q}$ with $\|f\|_{\mathcal{B}^0_{p,q}} \le c$, and $f$ vanishes on $\sigma$ and $f(0) = 1$. Thus,
  $\ccap(\sigma) \le c$.
\end{proof}

\begin{proof}[Proof of Theorem  \ref{thm:Bpp_lower_bound}]
  Let $2 < q < \infty$. Then, for $f \in \mathcal{B}^0_{\infty,\infty} \cap \mathcal{B}^0_{2,2}$, we have
  \begin{equation*}
    \int_{0}^1 (1 -\rho)^{q-1} \|f'_\rho\|_{L^q}^q \, d \rho \le
    \int_{0}^1 (1  - \rho)^{q-2} \|f'_\rho\|_{L^\infty}^{q-2} (1 - \rho) \|f_\rho'\|_{L^2}^2 \, d \rho
    \le {\|f\|^*}^{q-2}_{\mathcal{B}^0_{\infty,\infty}} {\|f\|^*}^2_{\mathcal{B}^0_{2,2}}.
  \end{equation*}
  Recalling that $\mathcal{B}^0_{2,2} = H^2$ with equivalence of norms, we conclude that there exists a constant $C > 0$
  so that for any Blaschke product $B$, we have
  \begin{equation*}
    \|B\|^*_{\mathcal{B}^0_{q,q}} \le C^{1/q} {\|B\|^*}^{1-\frac{2}{q}}_{\mathcal{B}^0_{\infty,\infty}}.
  \end{equation*}
  (This also follows from the Littlewood--Paley inequality.) Note that the inequality trivially holds for $q=\infty$ as well.
  
  Let $\varepsilon > 0$. It follows from a theorem of Aleksandrov, Anderson, and Nicolau \cite[Theorem 2]{AAN99} 
  that there exists an infinite Blaschke product $B$ with $\|B\|^*_{\mathcal{B}^0_{\infty,\infty}} \le \varepsilon$.
  By precomposing $B$ with a conformal automorphism,
  we may assume that $|B(0)| = \frac{1}{2}$.
  By the preceding estimate with $q = p'$, we have $\|B\|^*_{\mathcal{B}^0_{p',p'}} \le C^{1/p'} \varepsilon^{1 - \frac{2}{p'}}$,
  so Lemma \ref{lem:lower_bound} implies that there exists a constant $c > 0$ such that
  \begin{equation*}
    \frac{1}{2} \ccap_{\mathcal{B}^0_{p,p}}(\sigma) \ge c\varepsilon^{\frac{2}{p'} -1}.
  \end{equation*}
  We distinguish two cases. If $\ccap_{\mathcal{B}^0_{p,p}}(\sigma) < \infty$, then Lemma
  \ref{lem:continuity} yields a finite subsequence $\sigma' \subset \sigma$ such that
  $\ccap_{\mathcal{B}^0_{p,p}}(\sigma') \ge \frac{1}{2} \ccap_{\mathcal{B}^0_{p,p}}(\sigma)$. Note that $\prod_{\lambda \in \sigma'} |\lambda| \ge \prod_{\lambda \in \sigma} | \lambda| = 1/2$, so
  \begin{equation*}
    \prod_{\lambda \in \sigma'} |\lambda| \cdot \ccap_{\mathcal{B}^0_{p,p}}(\sigma') \ge
    c\varepsilon^{\frac{2}{p'} -1}.
  \end{equation*}
  If $\ccap_{\mathcal{B}^0_{p,p}}(\sigma) = \infty$, 
  then Lemma \ref{lem:continuity} directly yields a finite sequence $\sigma' \subset \sigma$ satisfying the last inequality.
  Since $\varepsilon > 0$ was arbitrary, the result follows.
\end{proof}
\bigskip


\section{\label{sec:Proofs-of-th_up_bds}Proofs of the upper bounds }

An obvious choice of a test function for an estimate of the capacity
from above is $f= B_{\sigma}/B_{\sigma}(0)  = (-1)^N \big(\prod_{i=1}^{N}   \lambda_{i} \big)^{-1}B_{\sigma}$,
$\sigma=\left(\lambda_{1},\dots,\lambda_{N}\right)$. Surprisingly,
this function gives sharp estimates in many situations. To use $f$ 
as a test function in the proof of Theorem \ref{thm:Upper_bds} we need first to obtain
some estimates for the norms of finite Blaschke products in Besov spaces
which may be of independent interest.

\subsection{Estimates for the $\mathcal{B}_{p,q}^{0}$ norms of finite Blaschke products} 

\begin{prop}
\label{bln} Let $(p,q)\in[1,\infty]^{2}$, $\sigma=\left(\lambda_{1},\dots,\lambda_{N}\right)\in\mathbb{D}^{N}$
and $B=B_{\sigma}$.

1) If $(p,q)\in[1,2]^{2}$, then 
\[
\|B\|_{\mathcal{B}_{p,q}^{0}}\lesssim(\log N)^{1/q-1/2}.
\]

2) If $1\leq p\leq q\leq\infty$ and $q\geq2$, then 
\[
\|B\|_{\mathcal{B}_{p,q}^{0}}\lesssim1.
\]

3) If $1\leq q\leq p<\infty$ and $p\geq2$, then 
\[
\|B\|_{\mathcal{B}_{p,q}^{0}}\lesssim(\log N)^{1/q-1/p}.
\]

4) If $1\leq q<\infty$ and $p=\infty$, then 
\[
\|B\|_{\mathcal{B}_{p,q}^{0}}\lesssim N^{1/q}.
\]
The constants in the relations $\lesssim$ depend only on $p,q$,
but not on $N$ and $\sigma$. Moreover, in all inequalities the dependence on the growth on $N$ is best possible.
\end{prop}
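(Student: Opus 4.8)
The plan is to take the obvious test function $f = B_\sigma/B_\sigma(0)$ and reduce the whole proposition to two elementary facts about a finite Blaschke product $B$ of degree $N$: the Schwarz--Pick inequality $|B'(z)| \le (1-|z|^2)^{-1}$, and the identity $\int_0^1 (1-\rho)\|B_\rho'\|_{L^2(\mathbb{T})}^2 \, d\rho \asymp \|B\|_{H^2}^2 = 1$, which follows at once by expanding $B$ in Taylor coefficients and using that $B$ is inner. Since $|B(0)| = \prod_i|\lambda_i| \le 1$, it suffices to control the seminorm $\|B\|_{\mathcal{B}_{p,q}^0}^*$. I will also use the pointwise bound $|B'(z)| \le \sum_{i=1}^N \frac{1-|\lambda_i|^2}{|1-\overline{\lambda_i}z|^2}$ (from $B' = \sum_i b_i'\prod_{j\ne i}b_j$) and the boundary bound $\|B'\|_{L^1(\mathbb{T})} = N$ together with the subharmonicity of $|B'|$ (both already invoked in Section~\ref{sec:Proof-of-th_low_bds}).

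For Regions~I--III I would first reduce to $p=2$. If $p \le 2$, then $\|B_\rho'\|_{L^p(\mathbb{T})} \le \|B_\rho'\|_{L^2(\mathbb{T})}$ on the probability space $\mathbb{T}$, so $\|B\|_{\mathcal{B}_{p,q}^0}^* \le \|B\|_{\mathcal{B}_{2,q}^0}^*$; if $p \ge 2$, Schwarz--Pick gives $\|B_\rho'\|_{L^p}^p \le (1-\rho^2)^{2-p}\|B_\rho'\|_{L^2}^2$, and substituting this into $\|B\|_{\mathcal{B}_{p,q}^0}^{*q}$ produces an integral $J_\beta := \int_0^1 (1-\rho)^{\beta-1}\|B_\rho'\|_{L^2}^\beta\,d\rho$ with $\beta = 2q/p$. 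So in all three regions one must estimate $J_\beta$ (with $\beta = q$ in the reduction for $p\le 2$). When $\beta \ge 2$ (Region~II, including $q=\infty$) one bounds $\|B_\rho'\|_{L^2}^{\beta-2} \le (1-\rho^2)^{2-\beta}$ and $J_\beta$ collapses directly to the $H^2$ identity, giving $O(1)$. When $\beta \le 2$ (Regions~I and III) I would write $(1-\rho)^{\beta-1} = [(1-\rho)\|B_\rho'\|_{L^2}^2]^{\beta/2}(1-\rho)^{\beta/2-1}$, cut the integral at $\rho = 1-1/N$, and apply H\"older in $\rho$ with exponents $2/\beta$ and $2/(2-\beta)$: the $H^2$ identity controls the first factor, and the second is $\big(\int_0^{1-1/N}\tfrac{d\rho}{1-\rho}\big)^{(2-\beta)/2} = (\log N)^{(2-\beta)/2}$. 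On $[1-1/N,1]$ one instead uses the crude bound $\|B_\rho'\|_{L^p}^p \le (1-\rho^2)^{1-p}\|B'\|_{L^1(\mathbb{T})} = (1-\rho^2)^{1-p}N$, whose contribution is $O(1)$. Unwinding $\beta$ gives $(\log N)^{1/q-1/2}$ in Region~I and $(\log N)^{1/q-1/p}$ in Region~III.

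For Region~IV ($p=\infty$) Schwarz--Pick alone gives a divergent integral, so a separate idea is needed. For $q \ge 1$ the inequality $(1-\rho)^{q-1}\|B_\rho'\|_{L^\infty}^{q-1} \le 1$ (again Schwarz--Pick) shows $\|B\|_{\mathcal{B}_{\infty,q}^0}^{*q} \le \int_0^1 M(\rho)\,d\rho$, where $M(\rho) := \max_{|z|=\rho}|B'(z)|$; so it suffices to prove $\int_0^1 M(\rho)\,d\rho \lesssim N$. Since $|B'|$ is subharmonic, $M$ is nondecreasing, whence $\int_0^1 M \le 1 + \sum_{m \ge 1} 2^{-m} M(1-2^{-m})$. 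Using the pointwise bound for $|B'|$ together with $|1-\overline{\lambda_i}z| \ge \max(1-|\lambda_i|,\, |\lambda_i|2^{-m})$ for $|z| = 1-2^{-m}$, the double sum over $m$ and $i$ can be carried out explicitly: for fixed $i$ the sum over $m$ of the $i$-th term telescopes to $O((1-|\lambda_i|)^{-1})$, so after multiplying by $1-|\lambda_i|^2$ and summing over $i$ one gets $O(N)$ (zeros with $|\lambda_i| < 1/2$ contribute $O(N)$ trivially).

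The part I expect to cost the most work is the sharpness. For $p=\infty$ the sequence $\sigma = \{1-2^{-j}\}_{j=1}^N$ on the positive real axis works: the $m$-th factor forces $\|B_\rho'\|_{L^\infty} \gtrsim (1-\rho)^{-1}$ on a neighbourhood of $\rho = 1-2^{-m}$, so $\int_0^1 (1-\rho)^{q-1}\|B_\rho'\|_{L^\infty}^q\,d\rho \gtrsim \sum_{m=1}^N 1 \asymp N$, and combined with Schwarz--Pick this also recovers the matching upper bound uniformly in such $\sigma$. For Region~III with $p<\infty$ (and trivially Region~II) one can take the multi-scale product $B^\star$ of \eqref{eq:sigma_star}: at the radii $\rho = r_m^{(n)}$ the derivative $(B^\star)'$ has about $2^m$ bumps of height $\asymp (1-\rho)^{-1}$ and total angular mass $\asymp 1/\log N$ (here the choice $a = 1-1/n$, which keeps every $|B_k| \approx 1$, is what prevents the remaining factors from contributing an exponentially small product), so $\|(B^\star)_\rho'\|_{L^p} \asymp (\log N)^{-1/p}(1-\rho)^{-1}$ and summing over $m$ gives $\|B^\star\|_{\mathcal{B}_{p,q}^0} \gtrsim (\log N)^{1/q-1/p}$. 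The genuinely delicate case is Region~I with $p<2$, where $B^\star$ only yields $(\log N)^{1/q-1/p} < (\log N)^{1/q-1/2}$; there one needs a Blaschke product whose derivative is roughly \emph{constant} (rather than concentrated) in the angular variable on each of $\asymp \log N$ dyadic circles, with $\|B_\rho'\|_{L^2} \asymp (\log N)^{-1/2}(1-\rho)^{-1}$ — plausibly a Rudin/Piranian-type or randomized configuration — and verifying that such a construction has the required behaviour is the main obstacle.
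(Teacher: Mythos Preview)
Your upper-bound argument for Regions I--III is essentially the paper's: reduce to $L^2$ via the monotonicity $\|B_\rho'\|_{L^p}\le\|B_\rho'\|_{L^2}$ (for $p\le2$) or Schwarz--Pick (for $p\ge2$), then split at $1-1/N$ and interpolate by H\"older against the $H^2$ identity. The only cosmetic difference is that the paper carries the H\"older step with exponents $p/q$ and $p/(p-q)$ on the full double integral rather than first collapsing to $J_\beta$; the arithmetic is the same. For Region~IV your dyadic argument works, but the paper's route is much shorter: from the pointwise bound $\|B_\rho'\|_\infty\le\sum_j\frac{1-|\lambda_j|^2}{(1-|\lambda_j|\rho)^2}$ one simply integrates in $\rho$ term by term, getting $\int_0^1\frac{1-|\lambda_j|^2}{(1-|\lambda_j|\rho)^2}\,d\rho=1+|\lambda_j|\le2$, hence $\int_0^1 M(\rho)\,d\rho\le 2N$ with no dyadic decomposition needed.

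On sharpness you have two issues. For Region~III your direct claim that $\|(B^\star)_\rho'\|_{L^p}\asymp(\log N)^{-1/p}(1-\rho)^{-1}$ on the critical circles is heuristically right but not so easy to make rigorous: one must control the product of the \emph{other} factors from below on a set of positive measure, and ``$|B_k|\approx1$'' fails near the zeros of the $m$-th factor. The paper sidesteps this entirely by invoking the duality already established: $\|B^\star\|_{\mathcal{B}_{p,q}^0}^*\ge\prod|\lambda_i|\cdot\ccap_{\mathcal{B}_{p,q}^0}(\sigma^\star)\gtrsim(\log N)^{1/q-1/p}$ from Theorem~\ref{thm:low_bd_p_q}, so no new computation is needed.

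For Region~I with $p,q<2$ you correctly identify the obstacle but do not have a construction; this is a genuine gap. Neither $B^\star$ nor a standard Rudin--Piranian product will do: what is needed is a finite Blaschke product with $\int_0^{1-1/N}\|B_\rho'\|_{L^1(\mathbb{T})}\,d\rho\gtrsim\sqrt{\log N}$, and the paper imports this from \cite{bk}, whose construction relies on the Ba\~nuelos--Moore results tied to Makarov's law of the iterated logarithm. This is a deep input, not something one improvises; once that $L^1$ lower bound is in hand, a single application of H\"older (with exponents $q'$ in $\rho$ and $p'$ in $t$) upgrades it to $\|B\|_{\mathcal{B}_{p,q}^0}^*\gtrsim(\log N)^{1/q-1/2}$ for all $(p,q)\in[1,2]^2$.
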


Note that there is an essential difference between the case $p<\infty$
and $p=\infty$, where the growth is much faster. 

In the proof of Proposition \ref{bln} we will need several simple
estimates, the first of which can be found in \cite{bk}. We will
give their proofs for the sake of completeness. 
\begin{lem}
Let $B$ be a Blaschke product of degree $N$. Then 
\begin{equation}
\int_{0}^{1}(1-\rho)^{p-1}\int_{0}^{2\pi}|B'(\rho e^{it})|^{p}dtd\rho\lesssim\begin{cases}
(\log N)^{1-p/2}, & 1\le p\le2,\\
1, & 2\le p<\infty,
\end{cases}\label{mal1}
\end{equation}
and, for $\rho\in[0,1)$ and $1\le p<\infty$, 
\begin{equation}
\int_{0}^{2\pi}|B'(\rho e^{it})|^{p}dt\lesssim\frac{N}{(1-\rho)^{p-1}}.\label{mal2}
\end{equation}
\end{lem}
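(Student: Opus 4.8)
The plan is to reduce everything to two ingredients: the elementary pointwise bound \eqref{mal2}, and the exact computation underlying the case $p=2$ of \eqref{mal1} (a Littlewood--Paley identity). From these the remaining cases follow by Schwarz--Pick and by splitting the $\rho$-integral at a suitable radius.

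First I would prove \eqref{mal2}. Since $B$ is a finite Blaschke product of degree $N$, the function $B'$ is rational with all poles outside $\overline{\mathbb D}$, so the integral means $\rho\mapsto\int_0^{2\pi}|B'(\rho e^{it})|\,dt$ are nondecreasing and bounded by their boundary value $\int_0^{2\pi}|B'(e^{it})|\,dt=2\pi N$ (the last equality because $B|_{\mathbb T}$ has winding number $N$ and strictly increasing argument, so that $|B'|$ on $\mathbb T$ is the derivative of that argument); this is exactly the observation already used in Section \ref{sec:Proof-of-th_low_bds}. Combining it with the Schwarz--Pick inequality $|B'(\rho e^{it})|\le(1-\rho^2)^{-1}$, used to peel off a factor $|B'|^{p-1}\le(1-\rho^2)^{1-p}$, gives $\int_0^{2\pi}|B'(\rho e^{it})|^p\,dt\le(1-\rho^2)^{1-p}\,2\pi N\lesssim N(1-\rho)^{1-p}$, which is \eqref{mal2}.

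For \eqref{mal1} I would begin with $p=2$: expanding $B'(z)=\sum_{k=1}^N k\widehat B(k)z^{k-1}$ yields $\int_0^{2\pi}|B'(\rho e^{it})|^2\,dt=2\pi\sum_k k^2|\widehat B(k)|^2\rho^{2k-2}$, and since $\int_0^1(1-\rho)\rho^{2k-2}\,d\rho=\tfrac1{2k(2k-1)}\asymp k^{-2}$ uniformly in $k\ge1$, the left-hand side of \eqref{mal1} is $\asymp\sum_k|\widehat B(k)|^2=\|B\|_{H^2}^2=1$ (there is no issue near $\rho=0$ precisely because of this uniform comparison). The case $2\le p<\infty$ is then immediate from Schwarz--Pick: $|B'(\rho e^{it})|^p\le(1-\rho^2)^{2-p}|B'(\rho e^{it})|^2$, so the left-hand side of \eqref{mal1} is $\lesssim\int_0^1(1-\rho)\int_0^{2\pi}|B'(\rho e^{it})|^2\,dt\,d\rho\lesssim1$. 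The substantive case is $1\le p<2$, where the factor $(\log N)^{1-p/2}$ emerges — this estimate goes back to \cite{bk}, but I would give a self-contained argument. Set $\rho_0=1-\tfrac1N$ and $M(\rho)=\big(\int_0^{2\pi}|B'(\rho e^{it})|^2\,dt\big)^{1/2}$. On the annulus $[\rho_0,1]$ one just invokes \eqref{mal2}, obtaining a contribution $\lesssim N(1-\rho_0)=1$. On $[0,\rho_0]$ one uses Hölder on $\mathbb T$ (valid since $p\le2$) to bound $\int_0^{2\pi}|B'(\rho e^{it})|^p\,dt$ by $M(\rho)^p$ up to a constant, writes $(1-\rho)^{p-1}M(\rho)^p=[(1-\rho)M(\rho)^2]^{p-1}M(\rho)^{2-p}$, and applies Hölder in $\rho$ with exponents $\tfrac1{p-1},\tfrac1{2-p}$ to bound the $[0,\rho_0]$-contribution by $\big(\int_0^{\rho_0}(1-\rho)M(\rho)^2\,d\rho\big)^{p-1}\big(\int_0^{\rho_0}M(\rho)\,d\rho\big)^{2-p}$. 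The first factor is $\lesssim1$ by the $p=2$ case; for the second one splits $[0,\rho_0]=[0,\tfrac12]\cup[\tfrac12,\rho_0]$, bounds $M\lesssim1$ on $[0,\tfrac12]$ by Schwarz--Pick, and applies Cauchy--Schwarz on $[\tfrac12,\rho_0]$, $\int_{1/2}^{\rho_0}M(\rho)\,d\rho\le\big(\int_{1/2}^{\rho_0}\tfrac{d\rho}{1-\rho}\big)^{1/2}\big(\int_{1/2}^{\rho_0}(1-\rho)M(\rho)^2\,d\rho\big)^{1/2}\lesssim(\log N)^{1/2}$, using the $p=2$ case once more. Hence $\int_0^{\rho_0}M\,d\rho\lesssim(\log N)^{1/2}$, the $[0,\rho_0]$-contribution is $\lesssim\big((\log N)^{1/2}\big)^{2-p}=(\log N)^{1-p/2}$, and \eqref{mal1} follows (for $p=1$ the Hölder step in $\rho$ is trivial and one uses $\int_0^{\rho_0}M\,d\rho\lesssim(\log N)^{1/2}$ directly).

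The main obstacle is the range $1\le p<2$: the point is to recognise that the logarithmic loss is concentrated on the thin annulus $[\rho_0,1]=[1-\tfrac1N,1]$, where only the crude estimate \eqref{mal2} is available, and then to interpolate — by the two Hölder steps above — between that crude estimate and the sharp $L^2$ Littlewood--Paley bound so as to distribute the loss correctly across the disc. Once the right splitting is identified the remaining computations are routine; the more delicate issue, addressed later in the paper, is that the exponent $1-p/2$ of $\log N$ produced by this argument cannot be improved.
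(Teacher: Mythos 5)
Your proposal is correct and follows essentially the same route as the paper: the Littlewood--Paley identity for $p=2$, Schwarz--Pick for $p\ge 2$ and for \eqref{mal2}, the splitting at $\rho=1-1/N$ together with $\int_0^{2\pi}|B'(\rho e^{it})|\,dt\le 2\pi N$, a Cauchy--Schwarz step producing the $(\log N)^{1/2}$, and H\"older with exponents $\tfrac1{p-1},\tfrac1{2-p}$ to interpolate for $1<p<2$. The only (cosmetic) difference is that you first pass to the $L^2$ means $M(\rho)$ via H\"older on the circle and treat the outer annulus with \eqref{mal2} for all $p<2$, whereas the paper interpolates the double integrals directly between its $p=1$ and $p=2$ bounds.
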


\begin{proof}
Since $\int_{0}^{2\pi}|B'(\rho e^{it})|dt\le2\pi N$, $\rho\in[0,1]$,
we conclude that 
\[
\int_{1-1/N}^{1}\int_{0}^{2\pi}|B'(\rho e^{it})|dtd\rho\lesssim1.
\]
Note that 
\begin{equation}
\label{tind}
\int_{0}^{1}\rho(1-\rho^{2})\int_{0}^{2\pi}
|f'(\rho e^{it})|^{2}dtd\rho=\pi\sum_{n=1}^{\infty}\frac{n}{n+1}|a_{n}|^{2} \le
\pi\|f\|_{H^{2}}^{2}\le\pi\|f\|_{H^{\infty}}^{2}
\end{equation}
for any function $f(z)=\sum_{n\ge0}a_{n}z^{n}$ in the Hardy space
$H^{2}$. Therefore, 
\[
\begin{aligned}
&\int_{0}^{1-1/N}\int_{0}^{2\pi}|B'(\rho e^{it})|dtd\rho  \\
  \lesssim &\bigg(\int_{0}^{1-1/N}\int_{0}^{2\pi}(1-\rho)|B'(\rho e^{it})|^{2}dtd\rho\bigg)^{1/2}
           \bigg(\int_{0}^{1-1/N}\int_{0}^{2\pi}\frac{dtd\rho}{1-\rho}\bigg)^{1/2}  \\
  \lesssim &(\log N)^{1/2}.
\end{aligned}
\]
Thus, the inequality is already proved for $p=2$ (simply apply \eqref{tind} to $B$)
and $p=1$. For $1<p<2$ inequality \eqref{mal1} follows from the Hölder inequality
with exponents $(p-1)^{-1}$ and $(2-p)^{-1}$ (note that $p= 2(p-1)+2-p$)
and the estimates for exponents 1 and 2. Finally, for $p>2$ it
follows from the estimate $(1-|z|^{2})|B'(z)|\le1$, $z\in\mathbb{D}$,
that $(1-\rho)^{p-1}\|B'_\rho\|_{L^p(\mathbb{T})}^p \le 
(1-\rho) \|B'_\rho\|_{L^2(\mathbb{T})}^2$ and we can again apply \eqref{tind}.

The estimate \eqref{mal2} is obvious: 
\[
\int_{0}^{2\pi}|B'(\rho e^{it})|^{p}dt\lesssim\frac{1}{(1-\rho)^{p-1}}\int_{0}^{2\pi}|B'(\rho e^{it})|dt\lesssim\frac{N}{(1-\rho)^{p-1}}.
\]
\end{proof}
\begin{proof}[Proof of Proposition \ref{bln}]
Let $1\le p<\infty$. Then it follows from \eqref{mal1} that 
\[
  (\|B\|_{\mathcal{B}_{p,p}^{0}}^*)^{p} =2 \pi \int_{0}^{1}(1-\rho)^{p-1}\int_{0}^{2\pi}|B'(\rho e^{it})|^{p}dtd\rho\lesssim\begin{cases}
(\log N)^{1-p/2}, & 1\le p\le2\\
1, & p\ge2.
\end{cases}
\]
Also trivially $\|B\|_{\mathcal{B}_{\infty,\infty}^{0}}\lesssim1$.

Now let $1\le q\le p<\infty$. We write 
\[
  \begin{aligned}  \frac{1}{2 \pi } (\|B\|_{\mathcal{B}_{p,q}^{0}}^*)^q & =\int_{0}^{1-1/N}(1-\rho)^{q-1}\bigg(\int_{0}^{2\pi}|B'(\rho e^{it})|^{p}dt\bigg)^{q/p}d\rho\\
 & +\int_{1-1/N}^{1}(1-\rho)^{q-1}\bigg(\int_{0}^{2\pi}|B'(\rho e^{it})|^{p}dt\bigg)^{q/p}d\rho=I_{1}+I_{2}.
\end{aligned}
\]
We show that $I_{2}\lesssim1$. Indeed, applying \eqref{mal2} we
get 
\[
I_{2}\lesssim N^{q/p}\int_{1-1/N}^{1}(1-\rho)^{q-1}(1-\rho)^{-q(1-1/p)}d\rho=N^{q/p}\int_{1-1/N}^{1}(1-\rho)^{-1+q/p}d\rho\lesssim1.
\]

To estimate $I_{1}$, we apply the Hölder inequality with exponents
$p/q$ and $p/(p-q)$ to get (with an obvious modification for $p=q$)
\[
I_1 \le\bigg(\int_{0}^{1-1/N}(1-\rho)^{p-1}\int_{0}^{2\pi}|B'(\rho e^{it})|^{p}dtd\rho\bigg)^{\frac{q}{p}}\bigg(\int_{0}^{1-1/N}\frac{d\rho}{1-\rho}\bigg)^{\frac{p-q}{p}}.
\]
Hence, for $1<p\le2$, 
\[
I_1 \lesssim(\log N)^{\frac{q}{p}(1-\frac{p}{2})+\frac{p-q}{p}}=(\log N)^{1-\frac{q}{2}},
\]
while for $p>2$ 
\[
I_1 \lesssim(\log N)^{\frac{p-q}{p}}.
\]
Thus, we have proved 3) and 1) for the case $p\ge q$.

If $1\le p<q<\infty$ we simply have 
\[
\begin{aligned}
\|B\|_{\mathcal{B}_{p,q}^{0}}^* & =\bigg(\int_{0}^{1}(1-\rho)^{q-1}\|B'_{\rho}\|_{L^{p}(\mathbb{T})}^{q}d\rho\bigg)^{1/q}\\
 & \lesssim\bigg(\int_{0}^{1}(1-\rho)^{q-1}\|B'_{\rho}\|_{L^{q}(\mathbb{T})}^{q}d\rho\bigg)^{1/q}\lesssim\begin{cases}
(\log N)^{1/q-1/2}, & 1\le q\le2,\\
1, & q\ge2.
\end{cases}
\end{aligned}
\]
The case $q=\infty$ is trivial by the Schwarz--Pick lemma. The proof of the statements 1)--3)
is completed.

4) Consider the case $p=\infty$. If $B(z)=\prod_{j=1}^{N}\frac{z-\lambda_{j}}{1-\bar{\lambda}_{j}z}$,
then 
\[
B'(z)=\sum_{j=1}^{N}\hat{B}_{j}(z)\frac{1-|\lambda_{j}|^{2}}{(1-\bar{\lambda}_{j}z)^{2}},
\]
where $\hat{B}_{j}(z)=\prod_{k\ne j}\frac{z-\lambda_{k}}{1-\bar{\lambda}_{k}z}$.
Hence,
\[
\|B'_{\rho}\|_{\infty}\lesssim\sum_{j=1}^{N}\frac{1-|\lambda_{j}|^{2}}{(1-|\lambda_{j}|\rho)^{2}}
\]
and, using again the fact that $\|B'_{\rho}\|_{\infty}\le(1-\rho)^{-1}$,
we get 
\[
  (\|B\|_{\mathcal{B}_{\infty,q}^{0}}^*)^{q}=\int_{0}^{1}(1-\rho)^{q-1}\|B'_{\rho}\|_{\infty}^{q}d\rho\le\int_{0}^{1}\|B'_{\rho}\|_{\infty}d\rho\lesssim N.
\]

Let us show that all estimates are sharp. The growth $(\log N)^{1/q-1/p}$ in the case 3) is achieved 
by the product $B^\star$ defined by \eqref{bsta}. Indeed,
for $1\le q\le p <\infty$,
\[
  \|B^\star\|_{\mathcal{B}_{p,q}^{0}}^* \ge \prod_{i=1}^{N}|\lambda_{i}|\cdot{\rm cap}_{\mathcal{B}_{p,q}^{0}}(\sigma^{\star})
\gtrsim (\log N)^{1/q-1/p}
\]
by Theorem \ref{thm:low_bd_p_q}.
In the case 2) the optimality of the estimate can be already seen on $B(z) = z^N$. 

For the case $1 \le p,q \le 2$ one can use an example of a Blaschke product constructed in \cite{bk}:
there exists a Blaschke product of order $N$ such that
$$
\int_0^{1- 1/N} \int_0^{2\pi} |B'(\rho e^{it})|dt d\rho \ge c  \sqrt{\log N},
$$
where $c>0$ is an absolute constant; see the end of Section 2 in \cite{bk}. This construction is based on deep results of R.~Bañuelos
and C.\,N.~Moore \cite{BanMoo} related to Makarov's law of the iterated logarithm.  
An easy application of the H{\"o}lder inequality shows that for $1 \le p,q \le 2$
$$
\int_0^{1- 1/N} (1-\rho)^{q-1} \|B'_\rho\|^q_{L^p(\mathbb{T})} d\rho \gtrsim ( \log N)^{1-q/2}.
$$

Finally, let us show that the estimate in the case 4) also is best possible. Take
$\lambda_j = 1 - 2^{-j}$. Since the sequence $(\lambda_j)$
is an interpolating sequence for $H^\infty$, there exists $\delta > 0$ such that
\begin{equation*}
  \prod_{k \neq j} \Big| \frac{\lambda_k - \lambda_j}{1 - \overline{\lambda_k} \lambda_j} \Big| \ge \delta
\end{equation*}
for all $j$. Thus, if $B$ denotes the Blaschke product with zeros $\lambda_1,\ldots,\lambda_N$, then
\begin{equation*}
  |B'(\lambda_j)| \ge \frac{\delta}{1 - |\lambda_j|^2} \gtrsim 2^j
\end{equation*}
for $j=1,\ldots,N$. It follows that $\|B'_{\rho}\|_\infty \ge 2^j$ for $\rho \ge \lambda_j$ and thus
\[
  \|B\|_{\mathcal{B}_{\infty,q}^{0}}^{q}\gtrsim\sum_{j=1}^{N-1}\int_{\lambda_j}^{\lambda_{j+1}}(1-\rho)^{q-1}\|B'_{\rho}\|_{\infty}^{q}d\rho
  \gtrsim N.
\]
\end{proof}
\medskip

\subsection{Proof of Theorem \ref{thm:Upper_bds}}
Consider the simplest test function 
\[
f=\frac{B_{\sigma}}{B_{\sigma}(0)} =  (-1)^{N}  \frac{B_{\sigma}}{\prod_{i=1}^{N} \lambda_{i}}.
\]
Then 
\[
{\rm cap}_{\mathcal{B}_{p,q}^{0}}(\sigma)\le\frac{\|B_{\sigma}\|_{\mathcal{B}_{p,q}^{0}}}{\prod_{i=1}^{N}\abs{\lambda_{i}}},
\]
and the statement follows from Proposition \ref{bln} in all cases
except $p=\infty$ (in Region III), where an application of Proposition
\ref{bln} will lead to a substantially worse growth order.

To treat the case $p=\infty$ we use the test function from \cite{NN1}. Put $r=1-1/N$ and consider
the finite Blaschke product $\tilde{B}$ with zeros $r\lambda_{1},\dots,r\lambda_{N}$,
\[
\tilde{B}(z)=\prod_{i=1}^{N}\frac{z-r\lambda_{i}}{1-r\overline{\lambda_{i}}z}.
\]
Let
\[
f(z)=(-1)^{N} \frac{\tilde{B}(rz)}{r^{N}\prod_{i=1}^{N}\lambda_{i}}.
\]
Clearly $f$ satisfies $f(0)=1$ and $f(\lambda_{i})=0$ for $i=1,\dots,n.$
Since $r^{N}\asymp1$, we have 
\[
\prod_{i=1}^{N}|\lambda_{i}|\cdot|f'(\rho e^{it})|\lesssim|\tilde{B}'(r\rho e^{it})|\lesssim\frac{1}{1-r\rho}.
\]
Then, for $1\le q<\infty$, 
\[
\bigg(\prod_{i=1}^{N}|\lambda_{i}|\bigg)^{q}\cdot\|f\|_{\mathcal{B}_{\infty,q}^{0}}^{q}\lesssim\int_{0}^{1-1/N}\frac{d\rho}{1-\rho}+\int_{1-1/N}^{1}\frac{d\rho}{1-r}\lesssim\log N.
\]
Theorem \ref{thm:Upper_bds} is proved. 
\qed

\end{document}